\def\bbbr{{\mathbb R}}
\def\bbbq{{\mathbb Q}}
\def\bbbz{{\mathbb Z}}
\def\boldalpha{\bm{\alpha}}
\def\v#1{{\bf#1}}
\newtheorem{theorem}[subsection]{Theorem}
\newtheorem{definition}[subsection]{Definition}
\newtheorem{lemma}[subsection]{Lemma}
\newtheorem{corollary}[subsection]{Corollary}
\newtheorem{proposition}[subsection]{Proposition}
\newenvironment{proof}{\noindent {\bf Proof}}{\hfill\break\medskip\hfill$\Box$\medskip}
\title{Geodesic continued fractions and LLL}
\author{{Frits Beukers}}
\date{October 6, 2013}
\begin{document}
\maketitle
\abstract{
We discuss a proposal for a continued fraction-like
algorithm to determine simultaneous rational approximations to
$d$ real numbers $\alpha_1,\ldots,\alpha_d$. It combines an
algorithm of Lagarias with ideas from LLL-reduction. We
dynamically LLL-reduce a quadratic form with parameter $t$ as
$t\downarrow0$.
Suggestions in this direction have been made many times over in the
literature, e.g. \cite[p 104]{hensley} or \cite{bosmasmeets}.
The new idea in this paper is that checking the LLL-conditions
consists of solving linear equations in $t$.}

\parindent=0pt

\section{Introduction}
Let $\boldalpha=(\alpha_1,\ldots,\alpha_d)\in\bbbr^d$
and suppose that not all $\alpha_i$ are rational. By application
of the pigeon-hole principle one can show that there exist
infinitely many $(\v p,q)=(p_1,p_2,\ldots,p_d,q)\in\bbbz^{d+1}$
with gcd one and $q>0$ such that
$$|\v p-q\boldalpha|\le \sqrt{d}q^{-1/d}\eqno{\rm(D)}.$$
Here $|\,.\,|$ denotes the Euclidean norm on $\bbbr^d$. An alternative
description is $||q\boldalpha||\le\sqrt{d}q^{-1/d}$ where $||\v x||$
denotes the distance of $\v x$ to the nearset lattice point. The
numbers $p_1,\ldots,p_d$ are simply the coordinates of such a nearest
lattice point.

One would like
to have an algorithm which computes such approximations. The Jacobi-Perron
algorithm and related versions of it (modified Perron, Brun, Selmer)
seem to yield upper bounds like $c\cdot q^{-\delta}$ but with $0<\delta<1/d$
for general $\boldalpha\in\bbbr^d$. So they are not expected to be very
good.

In a letter to Jacobi, Hermite explained another idea to construct good
simultaneous approximations, \cite[p106]{hermiteletter}. See also
\cite[p xii,xiii]{picard}. Choose $t>0$ and consider
the quadratic form
$$Q_t=(x_1-\alpha_1y)^2+\cdots+(x_d-\alpha_dy)^2+ty^2$$
in $x_1,\ldots,x_d,y$. Choose integers $p_1,\ldots,p_d,q$ as arguments
which minimize this form. Then Hermite was able to show that
$$|\v p-\boldalpha q|\le \gamma_dq^{-1/d}$$
where $\gamma_d$ is a number depending only on $d$.
For a proof see Proposition \ref{lagariasobservation} in this paper.
All that is required now, is a reduction algorithm that enables one to find
the minimizing set of integers.

In 1994 Jeff Lagarias, in \cite{lagarias}, took up this idea again
and proposed an algorithm which consists in decreasing $t$ to $0$ and along
the way perform coordinate changes so that the form remains Minkowski
reduced (see Section \ref{reduction} for a definition). The result is an
algorithm of the type sketched on page \pageref{geodesicalgorithm}.
A similar elaboration
in the case $d=1$ can already be found in a paper by Humbert, \cite{humbert}.
The result is an algorithm that produces good simultaneous approximations
like (D). In \cite{lagarias} Lagarias gives an analysis of this
algorithm. For example, it
finds best approximations in the Euclidean norm sense. That is, it
finds $q\in\bbbz_{>0}$ such that $||q\boldalpha||\le||q'\boldalpha||$ for
all integers $q'$ with $0<q'<q$. However, it is not guaranteed that all
of them are found.

A nice feature of Lagarias' algorithm is that the Minkowski reducedness conditions
are linear in $t$, which makes the check and update process easy. The disadvantage is
that the number of conditions grows prohibitively large as $d$ increases.
Already for $d=7$ about 90,000 conditions are needed.

This problem might be circumvented by the use of LLL-reduction instead of Minkowki
reduction (again see Section \ref{reduction} for a definition). Since the LLL-algorithm
gives suboptimal results, one cannot expect to find guaranteed best approximations.
If one is willing to accept this, another potential
problem is that the LLL-reduction conditions are non-linear in the coefficients
of $Q_t$, thus making their verification difficult.
However, the main contribution of this paper is the observation is that
the conditions are still linear in $t$. The algorithm we propose in Section
\ref{algorithm} is not very surprising, but its feasibility is based on
the linearity in $t$ of the LLL-conditions. We have not carried out any
experiments yet to see if the algorithm is practical in any sense.

We remark that the same idea and results would also work in finding small values of
$|q+p_1\alpha_1+\cdots+p_d\alpha_d|$. One would have to use the
family of forms $p_1^2+\cdots+p_d^2+t(q+p_1\alpha_1+\cdots+p_d\alpha_d)^2$
with $t\uparrow\infty$.

{\bf Acknowledgements} Many thanks to Robbert Fokkink and Cor Kraaikamp for
their invation to the workshop 'Probability and Numbers' in Delft. Thanks
also to Catherine Goldstein who provided me with a number of very interesting
references to the work of Charles Hermite.

\section{Quadratic forms}
A quadratic form in the variables $x_1,x_2,\ldots,x_n$ is a homogeneous
quadratic polynomial with coefficients in $\bbbr$. We write such a form
in the shape
$$\sum_{i,j=1}^n q_{ij}x_ix_j\qquad \forall i,j: q_{ij}=q_{ji}.$$
Very often we abbreviate this to $Q(\v x)$. Without causing too much
confusion we also use the notation $Q$ for the $n\times n$-matrix with
entries $q_{ij}$. We call this the matrix associated to the quadratic form
and the absolute value of the determinant of $Q$ is called the {\it determinant}
of the form.

The form $Q(\v x)$ is called {\it positive definite} if $Q(\v x)\ge0$ for
all $\v x\in\bbbr^n$ and $Q(\v x)=0\iff \v x=\v 0$. From now on, when we
speak of form, we mean a positive definite quadratic form.

For us, an important question will to determine the minimal non-zero value
of the set $\{Q(\v x)|\v x\in\bbbz^n\}$, which we denote by $\mu(Q)$. We have the
following theorem (see \cite[Ch 12]{cassels}).
\begin{theorem}[Hermite]\label{hermite} For every $n\ge2$ there exists $\gamma_n$ such
that $\mu(Q)\le \gamma_nD(Q)^{1/n}$ for all positive definite forms $Q$ in
$n$ variables.
\end{theorem}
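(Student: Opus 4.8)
\medskip\noindent\textbf{Proof proposal.} The plan is to argue by induction on $n$, in the spirit of Hermite's own reduction: take a shortest integer vector for $Q$, rotate it onto a coordinate axis, and split off a positive definite form in $n-1$ variables. First one checks that $\mu(Q)$ is attained: since $Q$ is positive definite, $Q(\v x)\to\infty$ as $|\v x|\to\infty$, so $\{\v x\in\bbbz^n:0<Q(\v x)\le c\}$ is finite for every $c$, and the infimum is a minimum, say $\mu(Q)=Q(\v m)$ with $\v m\in\bbbz^n\setminus\{\v 0\}$. A shortest vector is necessarily primitive (if $\v m=k\v m'$ with $k\ge2$ then $Q(\v m')=Q(\v m)/k^2<\mu(Q)$), so $\v m$ can be completed to a $\bbbz$-basis; the associated unimodular change of variables preserves both $\mu(Q)$ and $D(Q)$, so we may assume $\v m=(1,0,\ldots,0)$, i.e. $q_{11}=\mu(Q)$.

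The induction step is then a completion of the square. Writing $Q(\v x)=q_{11}\bigl(x_1+\sum_{j\ge2}(q_{1j}/q_{11})x_j\bigr)^2+Q'(x_2,\ldots,x_n)$, the form $Q'$ is positive definite in $n-1$ variables, and since the shear $x_1\mapsto x_1-\sum_{j\ge2}(q_{1j}/q_{11})x_j$ has determinant $1$ and block-diagonalizes $Q$, we get $D(Q)=q_{11}D(Q')$. Apply the inductive hypothesis to $Q'$: there is a nonzero $(x_2,\ldots,x_n)\in\bbbz^{n-1}$ with $Q'(x_2,\ldots,x_n)\le\gamma_{n-1}D(Q')^{1/(n-1)}$. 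Then pick $x_1\in\bbbz$ with $\bigl|x_1+\sum_{j\ge2}(q_{1j}/q_{11})x_j\bigr|\le1/2$. The vector $\v x=(x_1,\ldots,x_n)$ is nonzero, so on one hand $Q(\v x)\ge\mu(Q)=q_{11}$, and on the other $Q(\v x)\le q_{11}/4+\gamma_{n-1}(D(Q)/q_{11})^{1/(n-1)}$. Rearranging gives $q_{11}^n\le((4/3)\gamma_{n-1})^{n-1}D(Q)$, i.e. $\mu(Q)\le\gamma_n D(Q)^{1/n}$ with $\gamma_n=((4/3)\gamma_{n-1})^{(n-1)/n}$; starting from the trivial case $\gamma_1=1$ (a form $q_{11}x_1^2$ has $\mu=q_{11}=D(Q)$) this unwinds to the classical bound $\gamma_n=(4/3)^{(n-1)/2}$.

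There is no serious obstacle here — the work is bookkeeping: one must be careful that the lattice vector supplied by the induction is genuinely nonzero (so that $Q(\v x)\ge\mu(Q)$ is legitimate) and that the non-integral shear used to define $Q'$ is only invoked to relate determinants, the integrality being used separately when $x_1$ is rounded. If one prefers a non-inductive argument, Minkowski's convex body theorem applied to the compact ellipsoid $\{\v x:Q(\v x)\le\lambda\}$, whose volume is $V_n\lambda^{n/2}D(Q)^{-1/2}$ with $V_n$ the volume of the Euclidean unit ball, yields a nonzero lattice point once this volume reaches $2^n$, giving $\gamma_n=4V_n^{-2/n}$; this is cleaner but hides Hermite's constructive idea, so I would present the induction and merely note the alternative.
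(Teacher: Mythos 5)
Your argument is correct. The paper itself gives no proof of this theorem --- it is stated as a classical fact with a pointer to Cassels, Chapter 12 --- so there is nothing internal to compare against; what you have written is precisely Hermite's original induction, which is the argument that reference supplies. The chain of steps all checks out: the minimum is attained by discreteness, a minimal vector is primitive and hence extendable to a basis of $\bbbz^n$, the unimodular change of variables preserves $\mu$ and $D$, the shear relating $D(Q)=q_{11}D(Q')$ is non-integral but is used only for the determinant identity while integrality enters only when $x_1$ is rounded, and the inequality $q_{11}\le q_{11}/4+\gamma_{n-1}(D(Q)/q_{11})^{1/(n-1)}$ rearranges as you say to give $\gamma_n=((4/3)\gamma_{n-1})^{(n-1)/n}$ and hence $(4/3)^{(n-1)/2}$. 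A small consistency check in your favour: this recursion gives $\gamma_2=2/\sqrt{3}$, which agrees with the optimal value the paper lists, though of course for $n\ge3$ your constant is only an upper bound for the true Hermite constant (the paper's later remark $\gamma_n\le 2n/3$ refers to the optimal constants, which your exponential bound does not recover --- but the theorem as stated only requires existence of some $\gamma_n$, so this is immaterial). The Minkowski alternative you sketch is also valid and is the quicker route if one does not care about the constant.
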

The smallest possible values of $\gamma_n$ are known as {\it Hermite's constants}.
We again denote them by $\gamma_n$. The first few values are
$\gamma_2=2/\sqrt{3},\gamma_3=2^{1/3},\gamma_4=\sqrt{2},\ldots$. In general
we have $\gamma_n\le 2n/3$.

It is also interesting to consider the so-called successive minima of a form.
The $i$-th successive minimum $\mu_i(Q)$ is defined as the smallest real number
$\mu$ such that the ball defined by $Q(\v x)\le\mu$ contains a set of $i$ independent
vectors from $\bbbz^n$. In particular, $\mu_1(Q)=\mu(Q)$.

Another feature of forms in $n$ variables is that the space of forms can be
identified with the Riemannian symmetric space $O(n,\bbbr)\backslash GL(n,\bbbr)$.
The correspondence is given by the map $g\in GL(n,\bbbr)\mapsto g^Tg$,
where $g^T$ denotes the transpose of $g$. Notice that $g_1^Tg_1=g_2^Tg_2$ if
and only if there exists $u\in O(n,\bbbr)$ such that $g_2=ug_1$. A metric
on $O(n,\bbbr)\backslash GL(n,\bbbr)$ is given by
$$ds^2={\rm trace}((dY\cdot Y^{-1})\cdot(dY\cdot Y^{-1})^T)$$
where $Y\in GL(n,\bbbr)$. The geodesics with respect to this metric are
the one-dimensional families of quadratic forms
$$e^{\lambda_1s}l_1(\v x)^2+e^{\lambda_2s}l_2(\v x)^2+\cdots+e^{\lambda_ns}l_n(\v x)^2$$
parametrized by $s$, where $l_1,\ldots,l_n$ are independent forms.
Thus we see that the family $Q_t$ is a
geodesic in the space of forms in $d+1$ variables, which accounts
for the name 'geodesic algorithm'.

\section{Reduction of forms}\label{reduction}
Two forms $Q,\tilde{Q}$ in $n$ variables are said to be equivalent if there
exists $g\in GL(n,\bbbz)$ such that $\tilde{Q}(\v x)=Q(g\v x)$. It is common
practice to choose suitably nice elements in each equivalence class, which we
call reduced forms. There exist several notions of reduced forms, but for us
the following two will be relevant: Minkowski reduced forms and LLL-reduced forms.

A form $Q(\v x)$ is called {\it Minkowski reduced} if for $i=1,2,3,\ldots,n$ we
have
$$Q(\v e_i)\le Q(\v m)\ {\rm for\ all}\ \v m\in\bbbz\ {\rm with}\ \gcd(m_i,\ldots,m_n)=1
\eqno{(M)}$$
Here $\v e_1,\v e_2,\ldots,\v e_n$ is the standard basis of $\bbbr^n$.
Another way of stating these inequalities is to say that $Q(\v e_i)$ is the minimum
of all $Q(\v m)$ with $\v m\in\bbbz^n$ such that $\v e_1,\ldots,\v e_{i-1},\v m$ can be
extended to a basis of $\bbbz^n$.
In particular $Q(\v e_1)=q_{11}$
is the smallest non-zero value of $Q$ restricted to $\bbbz^n$. The value $Q(\v e_2)=q_{22}$
is smallest value of all $\v x\in\bbbz^n$ independent of $\v e_1$. However, it is not always
true that for a Minkowksi reduced form $Q(\v e_i)$ is the smallest value of $Q$ at all
$\v x\in\bbbz^n$ independent of $\v e_1,\ldots,\v e_{i-1}$. The smallest value of $i$ for which
this fails is $i=5$.

We denote the set of Minkowski reduced forms by ${\cal M}_n$. We have the following properties
(see \cite[Ch 12]{cassels}).
\begin{proposition}Let notations be as above. Then,
\begin{enumerate}
\item Every equivalence class of forms contains an element in ${\cal M}_n$.
\item Every equivalence class of forms contains finitely many forms in ${\cal M}_n$.
\item Two forms in the interior of ${\cal M}_n$ can only be equivalent via
trivial substitutions of the form $x_i\to \epsilon_i x_i$ for all $i$, where
$\epsilon_i\in\{\pm1\}$.
\item For every $Q\in{\cal M}_n$ we have
$$\mu_i(Q)\le Q(\v e_i)\le 2^i\mu_i(Q),\quad i=1,\ldots,n.$$
\item The space ${\cal M}_n$ can be defined by a finite number of inequalities
of the form (M).
\end{enumerate}
\end{proposition}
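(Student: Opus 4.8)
The plan is to treat the five statements largely independently, since they are classical facts whose proofs can be found in Cassels, and to indicate the mechanism behind each. For (1), I would argue by a reduction/descent procedure: given any positive definite form $Q$, the value $\mu(Q)=\mu_1(Q)$ is attained at some primitive vector $\v m_1\in\bbbz^n$; choose $g\in GL(n,\bbbz)$ sending $\v e_1$ to $\v m_1$, apply it, and then repeat the argument inductively on the quotient lattice, at each stage minimizing $Q(\v m)$ over primitive vectors that extend the chosen partial basis. The only subtlety is that such a minimum exists at all: because $Q$ is positive definite, $\{\v x:Q(\v x)\le R\}$ is a bounded region, so it contains only finitely many lattice points, which also disposes of (2)—there are only finitely many candidate bases realizing the reduction inequalities, hence finitely many reduced forms in each class. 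For (5), one observes that although condition (M) is stated with infinitely many vectors $\v m$, by positive definiteness only vectors $\v m$ with entries bounded in terms of $Q(\v e_1),\ldots,Q(\v e_n)$ can possibly violate an inequality; one then needs the classical fact (this is the genuinely hard input) that on the locus already cut out by the "short" inequalities these bounds are uniform, so that finitely many inequalities of the form (M) suffice to define ${\cal M}_n$.

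For (3), I would use the standard observation that if two forms $Q_1,Q_2$ in the interior of ${\cal M}_n$ are equivalent via $g\in GL(n,\bbbz)$, then the diagonal entries must match, $Q_1(\v e_i)=Q_2(\v e_i)$ for all $i$, because in the interior the inequalities in (M) are strict and $Q(\v e_i)$ is characterized as a strict minimum; feeding this back through the equivalence forces $g$ to permute, up to sign, the vectors realizing these minima, and strictness rules out genuine permutations, leaving only the sign changes $x_i\to\epsilon_i x_i$. The main work here is bookkeeping with the strict inequalities to exclude off-diagonal contributions to $g$.

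The heart of the matter is (4), the inequality $\mu_i(Q)\le Q(\v e_i)\le 2^i\mu_i(Q)$. The left-hand inequality is immediate: since $\v e_1,\ldots,\v e_i$ are independent and each satisfies $Q(\v e_j)\le Q(\v e_i)$ for $j\le i$ (a consequence of (M) applied successively), the ball $Q(\v x)\le Q(\v e_i)$ contains $i$ independent lattice vectors, so $\mu_i(Q)\le Q(\v e_i)$. The right-hand inequality is the real content and I expect it to be the main obstacle. The plan is: let $\v f_1,\ldots,\v f_i$ be independent lattice vectors with $Q(\v f_j)\le\mu_i(Q)$; I want to show $Q(\v e_i)\le 2^i\mu_i(Q)$. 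Since the $\v f_j$ are independent, at least one of them, say $\v f_k$, is independent of $\v e_1,\ldots,\v e_{i-1}$, so by the characterization following (M), $Q(\v e_i)\le Q(\v f_k)$—wait, this gives $Q(\v e_i)\le\mu_i(Q)$ directly, which is too strong and indeed false in general; the correct argument must account for the fact that $\v f_k$ may not be primitive modulo $\langle\v e_1,\ldots,\v e_{i-1}\rangle$. The fix is to write $\v f_k$ in terms of a basis adapted to the flag and reduce its $\v e_1,\ldots,\v e_{i-1}$-components using (M)-type size bounds, picking up a factor of $2$ at each of the $i$ steps; tracking that the accumulated factor is at most $2^i$ is the delicate estimate, and this is exactly where I would lean on the detailed treatment in \cite[Ch 12]{cassels} rather than reproduce it.
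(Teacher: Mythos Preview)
The paper does not prove this proposition at all: it simply states the five items and refers the reader to \cite[Ch 12]{cassels}. So there is nothing to compare your argument against beyond that citation. Your sketch is already considerably more detailed than what the paper provides, and the outlines you give for (1), (2), (3) and (5) are the standard ones and are fine as sketches; for (4) you correctly identify both the easy direction and the genuine obstruction in the hard direction (non-primitivity of $\v f_k$ modulo $\langle\v e_1,\ldots,\v e_{i-1}\rangle$), and your instinct to defer the bookkeeping of the factor $2^i$ to Cassels is exactly what the paper itself does for the whole proposition.
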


We illustrate the last fact for the cases $n=2,3$ (see \cite[Ch 12, Lemma 1.2]{cassels}.
When $n=2$ the form
reads $ax^2+2bxy+cy^2$ with associated matrix
$$\pmatrix{a & b\cr b & c\cr}.$$
The form is positive definite if and only if $a>0,b^2-ac<0$. Its determinant reads $ac-b^2$.
It is not hard to show that the form is Minkowski reduced if
$$|b|\le a\le c.$$
A general ternary form (the case $n=3$) reads
$$ax^2+2bxy+2cxz+dy^2+2eyz+fz^2.$$
One can show that it is Minkowski-reduced if and only if
\begin{eqnarray*}
&&a\le d\le f,\quad |b|\le a,|c|\le a,|e|\le d\\
&&a+d\ge (\pm b\pm c\pm e),\quad {\rm zero\ or\ two\ minus\ signs}.
\end{eqnarray*}

Having a Minkowski-reduced form equivalent to our given form $Q$ yields a precious
amount of information on $Q$. For example, the coefficient of $x_1^2$ of the reduced
form is precisely $\mu(Q)$. Therefore it is of interest to find procedures that
produce a reduced form equivalent to $Q$.
In the case $n=2$ there is already the well-known reduction procedure by Gauss.
For other small values of $n$ one can also device reduction
procedures which are based on the inequalities that characterize Minkowksi reducedness.
Unfortunately it turns out that already when $n=7$, the number of inequalities has risen to about
90,000. So it is clear that for $n>6$ Minkowski reduction procedures tend to become
unwieldy. Nevertheless, there are a number of papers in which one proposes Minkowski
reduction algorithms for higher dimensions, see for example \cite{afflerbachgrothe},
\cite{helfrich}, \cite{zhangqiaowei}

There is another concept of reducedness which avoids the exponential growth
of reduction conditions, but at the cost of non-optimal output.
It is called LLL-reduction, named after its inventors Laszlo Lovasz, Arjen Lentra and
Hendrik Lenstra, who proposed it in 1982, \cite{LLL}. The corresponding reduction algorithm that
belongs to it has been extremely successful in many applications. It is simple, fast,
even in large dimension, and yields good results.

To define LLL-reducedness we write $Q$ in the form
\begin{eqnarray*}
Q(\v x)&=&b_1(x_1+\mu_{12}x_2+\cdots+\mu_{1n}x_n)^2\\
&&+b_2(x_2+\mu_{23}x_3+\cdots+\mu_{2n}x_n)^2\\
&&\vdots\\
&&+b_{n-1}(x_{n-1}+\mu_{n-1,n}x_n)^2+b_nx_n^2.
\end{eqnarray*}

We say that we have written $Q$ in {\it recursive form}.
\begin{definition}\label{LLLconditions}
Fix a number $\omega\in[3/4,1]$ (slack factor). We call the form $Q$
{\it LLL-reduced} if
\begin{enumerate}
\item $|\mu_{ij}|\le 1/2$ for all $i<j$.
\item $\omega b_{i}\le b_{i+1}+\mu_{i,i+1}^2b_i$ for all $i<n$.\\
(Lovasz condition)
\end{enumerate}
\end{definition}

Using the recursive form Hermite already defines a notion of reduction,
\cite[p 122 ff]{hermiteletter}. 
A form $Q$ is called (Hermite) reduced if either one of the following
holds,
\begin{itemize}
\item $n=1$.
\item When $n>1$, we have $b_1=\mu(Q)$, $|\mu_{1j}|\le1/2$ for $j=2,\ldots,n$
and the form $Q-b_1(x_1+\mu_{12}x_2+\cdots+\mu_{1n}x_n)^2$ in $x_2,\ldots,x_n$ is reduced.
\end{itemize}
Nowadays
it is often called Hermite-Korkine-Zolotarev (HKZ) reducedness. One can easily show
that HKZ-reducedness implies that $b_{i+1}+\mu_{i,i+1}^2b_i\ge b_i$ for all $i<n$. 
So Lovasz condition can be seen as a relaxed version of this inequality
(when $\omega<1$).

In the literature LLL-reducedness is usually formulated in terms of lattice bases.
In this paper we consider a version which is in terms of quadratic forms.
Naively speaking one might think that reducedness of the quadratic form
entails $|\mu_{ij}|\le 1/2$ for all $i<j$, which we have seen earlier,
and the condition $b_1\le b_2\le\cdots \le b_n$. This is not going to work
however. The innovation of LLL is to replace the naive condition $b_i\le b_{i+1}$
by the Lovasz condition
given above. Let us denote $\tilde{b}_{i}=b_{i+1}+\mu_{i,i+1}^2b_{i}$. Then
one can easily verify that if we swap the variables $x_i,x_{i+1}$ in $Q(\v x)$
and rewrite the new form in recursive form again, the coefficient of
$(x_i+\cdots)^2$ is precisely $\tilde{b}_{i}$. The coefficient of $(x_{i+1}+\cdots)^2$
in the new form is $b_{i+1}b_i/\tilde{b}_{i}$.

An LLL-reduced form has many interesting properties.

\begin{theorem}[LLL]\label{LLLproperties}
Let $Q$ be a positive definite form in $n$ variables
and suppose $Q$ is LLL-reduced with $\omega=3/4$. Then
\begin{enumerate}
\item $D(Q)\le \prod_{i=1}^nQ(\v e_i)\le 2^{n(n-1)/2}D(Q).$
\item $Q(\v e_1)\le 2^{(n-1)/2}D(Q)^{1/n}.$
\item $Q(\v e_1)\le 2^{n-1}\mu(Q)$.
\item For $k=1,2,\ldots,n$ and all $j\le k$ we have
$$Q(\v e_j)\le 2^{n-1}\mu_k(Q).$$
\end{enumerate}
\end{theorem}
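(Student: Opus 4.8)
The plan is to exploit the recursive form together with the Lovász condition to control the diagonal coefficients $b_i$, and then translate everything back to the quantities $Q(\v e_i)$. First I would record the basic inequality that follows from LLL-reducedness: since $|\mu_{i,i+1}|\le 1/2$, the Lovász condition $\omega b_i\le b_{i+1}+\mu_{i,i+1}^2 b_i$ with $\omega=3/4$ gives $b_{i+1}\ge (\omega-\tfrac14)b_i=\tfrac12 b_i$, hence $b_i\le 2^{j-i}b_j$ for all $i\le j$. Also, reading off the recursive form at $\v x=\v e_i$ one sees $Q(\v e_i)=b_i+\sum_{k<i}\mu_{ki}^2 b_k$, and using $|\mu_{ki}|\le 1/2$ together with the $b$-comparison $b_k\le 2^{i-k}b_i$ gives $b_i\le Q(\v e_i)\le (1+\tfrac14\sum_{k<i}2^{i-k})b_i$, which I would simplify to a clean two-sided bound $b_i\le Q(\v e_i)\le 2^{i-1}b_i$ (the constant here is what makes part 1 come out as stated; I would check the exact exponent rather than carry a crude bound).

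For part 1, the determinant is basis-independent up to the $GL(n,\bbbz)$ substitution that produces the recursive form, so $D(Q)=\prod_{i=1}^n b_i$. Multiplying the inequalities $b_i\le Q(\v e_i)\le 2^{i-1}b_i$ over $i=1,\ldots,n$ immediately yields $D(Q)\le\prod_i Q(\v e_i)\le 2^{0+1+\cdots+(n-1)}D(Q)=2^{n(n-1)/2}D(Q)$. Part 2 then follows by combining part 1 with the monotonicity-type estimate $Q(\v e_1)=b_1\le 2^{i-1}b_i$ for every $i$: raising to the $n$-th power, $Q(\v e_1)^n\le 2^{(n-1)+(n-2)+\cdots+0}\prod_i b_i$, hmm, more directly $Q(\v e_1)^n=b_1^n\le\prod_i(2^{i-1}b_i)=2^{n(n-1)/2}D(Q)$, and taking $n$-th roots and noting $2^{(n-1)/2}=2^{n(n-1)/(2n)}$ gives exactly $Q(\v e_1)\le 2^{(n-1)/2}D(Q)^{1/n}$.

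Parts 3 and 4 are where the argument has to engage with the lattice, not just the diagonal: I need to compare $Q(\v e_1)$ (and $Q(\v e_j)$) with the genuine minima $\mu(Q)$, $\mu_k(Q)$, which are infima over all integer vectors. The key observation is that for any nonzero $\v m\in\bbbz^n$, if $\ell$ is the largest index with $m_\ell\neq 0$, then in the recursive form every inner linear form $(x_i+\mu_{i,i+1}x_{i+1}+\cdots)$ with $i<\ell$ may vanish but the $\ell$-th one equals $m_\ell\in\bbbz\setminus\{0\}$, so $Q(\v m)\ge b_\ell\ge 2^{-(\ell-1)}b_1\ge 2^{-(n-1)}Q(\v e_1)$; this proves part 3. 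For part 4, given $k$ independent integer vectors realizing the ball $Q(\v x)\le\mu_k(Q)$, their largest-nonzero-coordinate indices $\ell$ cannot all be $\le k-1$ (else they would span a space of dimension $<k$), so one of them has $\ell\ge k$, giving $\mu_k(Q)\ge b_\ell\ge 2^{-(\ell-1)}b_1=2^{-(\ell-1)}Q(\v e_1)\ge 2^{-(n-1)}Q(\v e_1)$, and since $Q(\v e_j)\le Q(\v e_k)$... no—I would instead note $Q(\v e_j)=b_j+\sum_{m<j}\mu_{mj}^2 b_m$ and use $b_j\le 2^{\ell-j}b_\ell$ to get $Q(\v e_j)\le 2^{n-1}b_\ell\le 2^{n-1}\mu_k(Q)$ for all $j\le k$. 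The main obstacle I anticipate is bookkeeping the powers of $2$ so that the stated constants ($2^{n(n-1)/2}$, $2^{(n-1)/2}$, $2^{n-1}$) come out exactly rather than off by a bounded factor; in particular the spanning/independence argument in part 4 needs to be stated carefully to guarantee the existence of a vector with large enough leading index.
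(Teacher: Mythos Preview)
Your proposal is correct and follows essentially the same route as the paper: derive $b_{i+1}\ge\tfrac12 b_i$ from the Lov\'asz condition, bound $Q(\v e_i)$ between $b_i$ and $2^{i-1}b_i$, read off parts 1--2 from $D(Q)=\prod_i b_i$, and for parts 3--4 use that any nonzero integer vector with largest nonzero coordinate in position $\ell$ has $Q$-value at least $b_\ell$, combined with the observation that $k$ independent vectors cannot all have leading index $\le k-1$. The only cosmetic difference is that the paper phrases the last step as ``choose $l$ minimal so that $\v x_1,\ldots,\v x_k$ lie in the span of $\v e_1,\ldots,\v e_l$,'' which amounts to taking the maximum of your leading indices; your version (pick any vector with leading index $\ge k$) works just as well.
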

Since the proofs in the literature are usually given for lattice bases we
reproduce a proof valid for forms here.
\medskip

\begin{proof}:
First let us note that
$$Q(\v e_i)=b_i+b_{i-1}\mu_{i-1,i}^2+\cdots +b_1\mu_{1i}^2.$$
Secondly,
$$D(Q)=\prod_{i=1}^n b_i.$$
Thirdly, it follows from Lovasz condition that $b_{i}\ge (\omega-\mu_{i-1,i}^2)b_{i-1}\ge {1\over2}b_{i-1}$.

By repeated application of the third inequality we find that $b_i\ge 2^{j-i}b_j$ whenever $j\le i$,
hence $b_j\le 2^{i-j}b_i$.
Together with our first observation this implies
\begin{eqnarray*}
Q(\v e_i)&\le&b_i+{1\over 4}(b_{i-1}+\cdots+b_1)\\
&\le& b_i+{1\over4}(2+\cdots+2^{i-1})b_i\le 2^{i-1}b_i.
\end{eqnarray*}
The first assertion of our theorem follows from
$$D(Q)=\prod_{i=1}^nb_i\le \prod_{i=1}^nQ(\v e_i)\le \prod_{i=1}^n2^{i-1}b_i=2^{n(n-1)/2}D(Q).$$
To prove the second assertion we observe that for all $j\le i$,
$$Q(\v e_j)\le 2^{j-1}b_j\le 2^{j-1}\cdot 2^{i-j}b_i=2^{i-1}b_i.$$
Applied to the case $j=1$ this gives
$$Q(\v e_1)^n\le \prod_{i=1}^n 2^{i-1}b_i=2^{n(n-1)/2}D(Q).$$
Taking the $n$-th roots gives our assertion.

The third assertion is a special case of the fourth, so we restrict to the
fourth. Take a set of independent $\v x_1,\ldots,\v x_k\in\bbbz^n$
such that
$$\max(Q(\v x_1),\ldots,Q(\v x_k))=\mu_k(Q).$$
Choose $l$ minimal so that $\v x_1,\ldots,\v x_k$ lies in the
span of $\v e_1,\ldots,\v e_l$. Since the $\v x_i$ are independent we have
$l\ge k$. Suppose $\v x_i$ has $l$-th coordinate $\ne0$. Denote this
coordinate by $\xi$. Then $\xi$ is a no-zero
integer and we trivially get $Q(\v x_i)\ge b_l\xi^2\ge b_l$. So whenever $j\le l$,
$$Q(\v e_j)\le 2^{l-1}b_l\le Q(\v x_i)\le 2^{n-1}\mu_k(Q).$$
In particular, since $j\le l$, this assertion holds for all $j\le k$.
\end{proof}
\medskip

For later purposes we also introduce partial LLL-reduction.
We call the form $Q$ {\it partially LLL-reduced} if
\begin{enumerate}
\item $|\mu_{i,i+1}|\le 1/2$ for all $i<n$.
\item $\omega b_{i}\le b_{i+1}+\mu_{i,i+1}^2b_i$ for all $i<n$.\\
(Lovasz condition)
\end{enumerate}

We give a short description of the LLL-reduction
algorithm for quadratic forms. There are two operations, a {\it shift}
and a {\it swap}. A shift is a substitution of the form $x_r\to x_r+ax_s$
where $s>r$ and $a\in\bbbz$ is chosen such that the resulting $\mu_{rs}$
satisfies $|\mu_{rs}|\le1/2$. A swap simply interchanges two
neighbouring variables $x_r,x_{r+1}$.

\begin{proposition}\label{substitutionsI}
Let $Q$ be a form. We perform a shift or a swap and
denote the resulting form by $\tilde{Q}$. Denote the parameters of the
recursive form of $\tilde{Q}$ by $\tilde{b}_i$ and $\tilde{\mu}_{ij}$.
Suppose we perform a shift $x_r\to x_r+ax_s$. Then
\begin{enumerate}
\item $\tilde{b}_i=b_i$ for all $i$.
\item $\tilde{\mu}_{is}=\mu_{is}+a\mu_{ir}$ for $i=1,\ldots,r-1$
\item $\tilde{\mu}_{rs}=\mu_{rs}+a$
\item $\tilde{\mu}_{ij}=\mu_{ij}$ for all other $i,j$.
\end{enumerate}
Suppose we perform a swap $x_r\leftrightarrow x_{r+1}$. Then
\begin{enumerate}
\item $\tilde{b}_{r}=b_{r+1}+\mu_{r,r+1}^2b_{r}$.
\item $\tilde{b}_{r+1}=b_{r}b_{r+1}/\tilde{b}_{r}$.
\item $\tilde{b}_{i}=b_i$ for all $i\ne r,r+1$.
\item $\tilde{\mu}_{ir}=\mu_{i,r+1}$ for $i<r$.
\item $\tilde{\mu}_{i,r+1}=\mu_{i,r}$ for $i<r$.
\item $\tilde{\mu}_{r,r+1}=b_{r}\mu_{r,r+1}/\tilde{b}_{r}$.
\item $\tilde{\mu}_{rj}=(b_r\mu_{r,r+1}\mu_{rj}+b_{r+1}\mu_{r+1,j})/\tilde{b}_{r}$ for $j>r+1$.
\item $\tilde{\mu}_{r+1,j}=\mu_{rj}-\mu_{r,r+1}\mu_{r+1,j}$ for $j>r+1$.
\item $\tilde{\mu}_{ij}=\mu_{ij}$ for all other $i,j$.
\end{enumerate}
\end{proposition}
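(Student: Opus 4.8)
The engine will be the uniqueness of the recursive form: a positive definite form equals $\sum_i c_iL_i(\v x)^2$ for exactly one choice of scalars $c_i>0$ and linear forms $L_i$, each $L_i$ being $x_i$ plus a linear combination of $x_{i+1},\ldots,x_n$. This is nothing but the uniqueness of the factorisation $Q=U^TDU$ with $U$ unipotent upper triangular and $D$ positive diagonal, which follows by an easy induction on $n$. In both cases $\tilde Q(\v x)=Q(g\v x)$ for a very simple $g\in GL(n,\bbbz)$, so substituting $g\v x$ into each $L_i$ turns the recursive form $Q=\sum_ib_iL_i^2$ into $\tilde Q=\sum_ib_i\tilde L_i^2$. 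The strategy is then: wherever the $\tilde L_i$ are still of the triangular shape above, read off the new parameters directly; wherever they are not, first repair that block by completing the square, and only then appeal to uniqueness.

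For the shift $x_r\to x_r+ax_s$ ($s>r$) the argument is short. I would observe that for $i<r$ the monomial $\mu_{ir}x_r$ of $L_i$ turns into $\mu_{ir}(x_r+ax_s)$, adding $a\mu_{ir}$ to the coefficient of $x_s$; that the leading term $x_r$ of $L_r$ turns into $x_r+ax_s$; and that for $i>r$ the variable $x_r$ does not occur in $L_i$ at all. Hence every $\tilde L_i$ is again triangular, no $b_i$ changes, and uniqueness yields (1)--(4) on inspection. The only point needing care is that triangularity is preserved, which is precisely where $s>r$ enters.

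The swap $x_r\leftrightarrow x_{r+1}$ carries the real content. For $i\ne r,r+1$ the substitution merely interchanges the coefficients of $x_r$ and $x_{r+1}$ inside $L_i$ (when $i<r$) or does nothing at all (when $i>r+1$), so those $\tilde L_i$ stay triangular and already display (4), (5) and the unchanged coefficients in (9). The non-triangular part is $b_r\hat L_r^2+b_{r+1}\hat L_{r+1}^2$ with $\hat L_r=\mu_{r,r+1}x_r+x_{r+1}+\sum_{j>r+1}\mu_{rj}x_j$ and $\hat L_{r+1}=x_r+\sum_{j>r+1}\mu_{r+1,j}x_j$, and the plan is to complete the square on $x_r$ and then on $x_{r+1}$. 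The $x_r^2$-coefficient of this block is $\tilde b_r=b_{r+1}+\mu_{r,r+1}^2b_r$; pulling out $\tilde b_rM_r^2$ and matching the coefficients of $x_rx_{r+1}$ and $x_rx_j$ forces the values of $\tilde\mu_{r,r+1}$ and $\tilde\mu_{rj}$ in (6)--(7). The residual form in $x_{r+1},\ldots,x_n$ then has $x_{r+1}^2$-coefficient $\tilde b_{r+1}=b_rb_{r+1}/\tilde b_r$; pulling out $\tilde b_{r+1}M_{r+1}^2$ and matching the coefficients of $x_{r+1}x_j$ forces $\tilde\mu_{r+1,j}=\mu_{rj}-\mu_{r,r+1}\mu_{r+1,j}$ in (8). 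At this point
$$\tilde Q=\sum_{i<r}b_i\tilde L_i^2+\tilde b_rM_r^2+\tilde b_{r+1}M_{r+1}^2+\sum_{i>r+1}b_iL_i^2$$
is in triangular shape, so by uniqueness it is the recursive form of $\tilde Q$, and reading off the coefficients gives all of (1)--(9).

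The step I expect to be the genuine obstacle is the identity hidden in the last display: once $\tilde b_rM_r^2+\tilde b_{r+1}M_{r+1}^2$ has been extracted from $b_r\hat L_r^2+b_{r+1}\hat L_{r+1}^2$, nothing must remain --- equivalently, the coefficients of $x_jx_k$ with $j,k>r+1$ have to match automatically, so that the $b_i$ and $\mu_{ij}$ with $i>r+1$ are genuinely untouched (items (3) and (9)). This is a short but slightly fiddly computation: on expanding $\tilde b_r\tilde\mu_{rj}\tilde\mu_{rk}+\tilde b_{r+1}\tilde\mu_{r+1,j}\tilde\mu_{r+1,k}$ the cross terms cancel, and using $\tilde b_r-b_r\mu_{r,r+1}^2=b_{r+1}$ one is left with exactly $b_r\mu_{rj}\mu_{rk}+b_{r+1}\mu_{r+1,j}\mu_{r+1,k}$. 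One could avoid even this by remarking that, under $Q(\v x)=|x_1\v b_1+\cdots+x_n\v b_n|^2$, a shift of $Q$ corresponds to the lattice move $\v b_s\to\v b_s+a\v b_r$ and a swap to $\v b_r\leftrightarrow\v b_{r+1}$, so that the formulas above are the classical Gram--Schmidt update rules of LLL-reduction; but carrying the computation out in the language of forms, as sketched here, fits the spirit of the paper.
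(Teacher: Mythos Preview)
Your proposal is correct and is precisely the ``straightforward computation'' that the paper leaves to the reader; the paper's own proof consists of the single sentence ``Straightforward computation'' with no further details. Your use of the uniqueness of the $U^TDU$ factorisation to read off the shift case directly, and your completion-of-the-square repair of the $(r,r+1)$-block in the swap case (including the check that the $x_jx_k$ coefficients for $j,k>r+1$ match so that nothing leaks into the tail), is exactly the computation the paper is gesturing at.
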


\begin{proof}: Straightforward computation.
\end{proof}

By a {\it global shift} we mean a sequence of shifts $x_i\to x_i+ax_j$
(with different $a$'s) after which $|\mu_{ij}|\le1/2$ for all $i<j$.
Here is a possible implementation of LLL-reduction.
\begin{enumerate}
\item For $i=1$ to $n-1$ perfom a shift $x_i\to x_i+ax_{i+1}$.
The result is that $|\mu_{i,i+1}|\le1/2$ for $i=1,\ldots,n-1$.
\item Enter the following {\bf loop}:
Find $i$ such that $b_{i+1}+\mu_{i+1,i}^2b_{i}<\omega b_{i}$.
\begin{itemize}
\item[-] If such $i$ exists, swap $x_{i+1}$ and $x_{i}$ and perform the
shifts $x_{i-1}\to x_{i-1}+ax_i, x_{i}\to x_{i}+a'x_{i+1}$ and $x_{i+1}\to x_{i+1}+a''x_{i+2}$ (if they make sense).
Then repeat the loop.
\item[-] If such $i$ does not exist: we exit the loop
\end{itemize}
\item The form is now partially LLL-reduced. To get an LLL-reduced
form, perform a global shift.
\end{enumerate}

The beauty of the LLL-algorithm is its running time.

\begin{theorem}[LLL]\label{runningtimeLLL} Let $B=\max_{i,j}|q_{ij}|$.
When $\omega<1$ the number of loop-iterations of the algorithm
is bounded above by $n^2\log(n^2B/\mu(Q))/|\log\omega|$.
\end{theorem}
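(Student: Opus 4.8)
The argument is the classical LLL running-time analysis, carried out for forms in the same spirit as the proof of Theorem \ref{LLLproperties}. The plan is to attach to each form a positive real ``energy'' $D$ which is unchanged by shifts, strictly decreased by a fixed multiplicative factor at every swap, bounded above at the start of the run, and bounded below throughout. Counting how many times $D$ can shrink then bounds the number of swaps, which up to an additive constant is the number of loop iterations.

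Write $Q$ in recursive form with parameters $b_i,\mu_{ij}$ and set $d_i=b_1b_2\cdots b_i$; equivalently $d_i$ is the determinant of the top-left $i\times i$ block of the matrix of $Q$, i.e. the determinant of $Q$ restricted to the sublattice $\bbbz\v e_1+\cdots+\bbbz\v e_i$. Put $D=\prod_{i=1}^{n-1}d_i$. By Proposition \ref{substitutionsI} a shift fixes every $b_i$, hence fixes $D$. For a swap $x_r\leftrightarrow x_{r+1}$ the same proposition gives $\tilde b_r\tilde b_{r+1}=b_rb_{r+1}$ and $\tilde b_i=b_i$ for $i\neq r,r+1$, so $\tilde d_i=d_i$ for all $i\neq r$ while $\tilde d_r/d_r=\tilde b_r/b_r=(b_{r+1}+\mu_{r,r+1}^2b_r)/b_r$. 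The algorithm performs this swap exactly when $b_{r+1}+\mu_{r,r+1}^2b_r<\omega b_r$, so each swap multiplies $D$ by a factor $<\omega$. Hence after $N$ swaps $D$ has been multiplied overall by less than $\omega^N$, and therefore $N\le\log(D_{\mathrm{start}}/D_{\mathrm{end}})/|\log\omega|$ (here $\omega<1$, so $|\log\omega|>0$).

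It remains to estimate $D$. For the initial value, Hadamard's inequality applied to the positive definite block $Q_i$ gives $d_i\le\prod_{j=1}^i q_{jj}\le B^i$, since each $q_{jj}=Q(\v e_j)$ is positive and at most $B$; hence $D_{\mathrm{start}}\le B^{n(n-1)/2}$. For the lower bound, which must hold at every stage, note that all substitutions used lie in $GL(n,\bbbz)$, so $\mu(Q)$ is an invariant of the run; applying Theorem \ref{hermite} to the $i$-dimensional form $Q_i$, whose minimum is at least $\mu(Q)$, gives $\mu(Q)\le\gamma_i d_i^{1/i}$, i.e. $d_i\ge(\mu(Q)/\gamma_i)^i$, so $D\ge\prod_{i=1}^{n-1}(\mu(Q)/\gamma_i)^i$ throughout. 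Taking logarithms and using the crude estimate $\gamma_i\le 2n/3\le n$ gives $\log(D_{\mathrm{start}}/D_{\mathrm{end}})\le\frac{n(n-1)}{2}\log\!\big(nB/\mu(Q)\big)\le n^2\log\!\big(n^2B/\mu(Q)\big)$, and combining this with the previous paragraph yields the claimed bound on $N$.

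There is no real obstacle here: everything follows from Proposition \ref{substitutionsI}, Hadamard's inequality and Theorem \ref{hermite}, and the only genuine content is the choice of the energy $D$ together with the observation that a swap is triggered precisely by the inequality that makes $D$ drop. The one point needing a little care is the last, purely arithmetic step: the looser factors $n^2$ and $n^2B$ (rather than the sharper $n(n-1)/2$ and $nB$) are there exactly so that the estimate absorbs the Hermite constants $\gamma_i$ and keeps the argument of the logarithm above $1$ irrespective of the size of $\mu(Q)$.
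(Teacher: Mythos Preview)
Your proof is correct and follows essentially the same route as the paper: track the product of the leading principal minors, note that shifts leave it fixed while every swap multiplies it by a factor $<\omega$, bound it above at the start and below throughout via Hermite's inequality applied to the $i$-variable subforms, and divide logarithms. The only cosmetic differences are that the paper includes the (invariant) full determinant $d_n$ in the product and uses the cruder column-length bound $d_i\le (Bi)^i$ where you use the sharper Hadamard inequality $d_i\le\prod_{j\le i}q_{jj}\le B^i$ for positive definite matrices.
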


Of course it is a bit strange to have a running time estimate in terms
of the unknown quantity $\mu(Q)$. However, in practice one works
with integer quadratic forms, in which case we have $\mu(Q)\ge1$.

We now give some explicit formula for $q_i$ and $\mu_{ij}$ in terms of the
coefficients $q_{ij}$ of $Q$.

\begin{theorem}\label{determinants}
Let $Q$ be a form in $n$ variables with matrix $(q_{ij})_{i,j=1,\ldots,n}$.
Let $b_i$ and $\mu_{ij}$ be the coefficients corresponding to the descending shape
of $Q$. For each $i,j$ with $1\le i\le j\le n$ we define
$$B_{ij}=\left|
\matrix{
q_{11} & \ldots & q_{1,i-1} & q_{1j}\cr
q_{21} & \ldots & q_{2,i-1} & q_{2j}\cr
\vdots & & \vdots & \vdots\cr
q_{i1} & \ldots & q_{i,i-1} & q_{ij}\cr
}\right|.$$
Then $b_i=B_{i,i}/B_{i-1,i-1}$ for $i=1,\ldots,n$ where we adopt the convention
$B_{00}=1$. Moreover,
$\mu_{ij}=B_{ij}/B_{ii}$ for all $i,j$ with $1\le i<j\le n$.
\end{theorem}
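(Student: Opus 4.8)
The plan is to recast the recursive form as a matrix factorisation and then identify each $B_{ij}$ as a minor of that factorisation. Introduce the upper-triangular matrix $L$ with $L_{ii}=1$, $L_{ij}=\mu_{ij}$ for $i<j$ and $L_{ij}=0$ for $i>j$, together with the diagonal matrix $D$ with entries $b_1,\dots,b_n$. The recursive shape of $Q$ says precisely that $Q(\v x)=(L\v x)^T D(L\v x)$, i.e. the matrix $(q_{ij})$ equals $L^T D L$ with $L^T$ lower unitriangular. This is the Cholesky-type ($U^TDU$) decomposition of $Q$, and the theorem is really the classical statement that the data of such a decomposition are ratios of minors of $(q_{ij})$; the task is just to check the normalisations match.

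For the formula $b_i=B_{ii}/B_{i-1,i-1}$: because $L$ is triangular, the leading $k\times k$ principal submatrix of $Q$ factors as $(L^{(k)})^T D^{(k)} L^{(k)}$, where the superscript $(k)$ denotes the leading $k\times k$ block. Since $\det L^{(k)}=1$, taking determinants gives $\det(q_{ij})_{i,j\le k}=b_1\cdots b_k$. But $B_{kk}$ is by definition exactly this leading principal minor, so $B_{kk}=b_1\cdots b_k$, and the claim for $b_i$ follows with the convention $B_{00}=1$.

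For the formula $\mu_{ij}=B_{ij}/B_{ii}$ with $i<j$: let $N$ be the $i\times i$ matrix whose determinant is $B_{ij}$, namely the first $i$ rows of $Q$ restricted to columns $1,\dots,i-1$ and $j$. Expanding each entry through $q_{ml}=\sum_p L_{pm}b_p L_{pl}$ and using $L_{pm}=0$ for $p>m$, one finds $N=(L^{(i)})^T W$, where the $l$-th column of $W$ (for $l\le i-1$) is $(b_1\mu_{1l},\dots,b_{l-1}\mu_{l-1,l},b_l,0,\dots,0)^T$ and the last column of $W$ is $(b_1\mu_{1j},\dots,b_i\mu_{ij})^T$. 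The point is that $W$ is upper triangular with diagonal $b_1,\dots,b_{i-1},b_i\mu_{ij}$, so $\det W=b_1\cdots b_{i-1}\cdot b_i\mu_{ij}=B_{ii}\mu_{ij}$; since $\det\bigl((L^{(i)})^T\bigr)=1$ we conclude $B_{ij}=\det N=B_{ii}\mu_{ij}$.

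The argument is not deep — it is bookkeeping with the $L^TDL$ decomposition — but the step that needs care is the third one: one has to keep straight which entries of $L$ vanish, restrict the factorisation cleanly to the first $i$ rows, and verify that the coordinate matrix $W$ of the selected columns, written in the basis given by the rows of $L^{(i)}$, is genuinely upper triangular with the stated diagonal. Once the shape of $W$ is pinned down, everything reduces to an immediate determinant computation.
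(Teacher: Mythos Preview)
Your argument is correct. The factorisation $Q=L^TDL$ with $L$ upper unitriangular is exactly the recursive form, and once this is written down the identification of $B_{ii}$ with $b_1\cdots b_i$ is immediate. Your computation of $B_{ij}$ via $N=(L^{(i)})^TW$ is also right: the key point, which you verify, is that the rows $m=1,\ldots,i$ of $Q$ only involve the first $i$ rows of $L^T$, so the factorisation restricts cleanly, and the resulting $W$ has entries $W_{p,k}=b_pL_{p,l_k}$ (with $l_k=k$ for $k<i$ and $l_i=j$), hence is upper triangular with the stated diagonal.

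The paper proceeds differently: it argues by induction on $i$, stripping off the first square $b_1(x_1+\cdots)^2$ and observing that the coefficients of the residual form $\tilde Q$ satisfy $\tilde q_{ij}=q_{ij}-q_{1i}q_{1j}/q_{11}$; a single Gaussian row elimination on the defining determinant then gives $\tilde B_{ij}=B_{ij}/q_{11}$, and the induction hypothesis applied to $\tilde Q$ finishes the job. Your approach is global rather than inductive: you write down the full Cholesky-type decomposition once and read off all the minors at a stroke. The paper's route is marginally more elementary in that it never names $L$ or $D$ explicitly, while yours makes the underlying linear algebra (LDU decomposition and its minors) more visible and avoids any induction. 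Both are short; yours is arguably cleaner once the factorisation is on the table.
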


\begin{proof}:
We proceed by induction on $i$. For $i=1$ the statement is straightforward to verify,
all determinants have size $1\times1$. Now let $i>1$ and suppose the statement
holds for $b_{i-1}$ and all $\mu_{i-1,j}$. Let us write
$$Q(x_1,\ldots,x_n)=b_1(x_1+\mu_{12}x_2+\cdots+\mu_{1n}x_n)^2+\tilde{Q}(x_2,\ldots,x_n).$$
Note that the coefficients $\tilde{q}_{ij}$ of $\tilde{Q}$ are given by
$$\tilde{q}_{ij}=q_{ij}-q_{1i}q_{1j}/q_{11}$$
for all $i,j$ with $2\le i\le j\le n$. Denote by $\tilde{B}_{i,j}$ the determinant of
the $(i-1)\times(i-1)$ matrix
$$\pmatrix{
\tilde{q}_{21} & \ldots & \tilde{q}_{2,i-1} & \tilde{q}_{1j}\cr
\tilde{q}_{31} & \ldots & \tilde{q}_{3,i-1} & \tilde{q}_{2j}\cr
\vdots & & \vdots & \cr
\tilde{q}_{i1} & \ldots & \tilde{q}_{i,i-1} & \tilde{q}_{ij}\cr
}.$$
By induction we know that $q_i=\tilde{B}_{i,i}/\tilde{B}_{i-1,i-1}$ and
$\mu_{ij}=\tilde{B}_{ij}/\tilde{B}_{ii}$ for $j>i$. Now consider the definition
of $B_{ij}$ given above. We perform a Gaussian row elimination using the element
$q_{11}$. It is straightforward to see that we get $\tilde{B}_{ij}=B_{ij}/q_{11}$.
This yields the desired formulae for $q_i$ and $\mu_{ij}$.
\end{proof}

\begin{proposition}\label{formulaC}
Let notations be as in the previous theorem. Let
$i<n$ and let $C_i$ be the subdeterminant
of $B_{i+1,i+1}$ obtained by deletion of the $i$-th row and column. Then
$$C_{i}B_{i,i}=B_{i+1,i+1}B_{i-1,i-1}+B_{i,i+1}^2.$$
As a consequence,
$$C_{i}/B_{i-1,i-1}=b_{i+1}+\mu_{i,i+1}^2b_{i}.$$
\end{proposition}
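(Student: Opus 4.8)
The plan is to prove the identity $C_iB_{ii}=B_{i+1,i+1}B_{i-1,i-1}+B_{i,i+1}^2$ by recognising all four quantities as minors of a single symmetric matrix and then invoking a Desnanot--Jacobi-type (Dodgson condensation) identity. Concretely, consider the $(i+1)\times(i+1)$ leading principal submatrix $M$ of $Q$, i.e. the matrix $(q_{kl})_{1\le k,l\le i+1}$. Its determinant is $B_{i+1,i+1}$. Deleting row and column $i$ from $M$ gives the matrix whose determinant is $C_i$ by definition. Deleting row and column $i+1$ gives the $i\times i$ leading principal submatrix, with determinant $B_{ii}$. Deleting rows/columns $i$ and $i+1$ gives $B_{i-1,i-1}$. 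Finally, deleting row $i$ and column $i+1$ (or, by symmetry of $Q$, row $i+1$ and column $i$) gives a matrix whose determinant is, up to sign, $B_{i,i+1}$ — here one checks that $B_{i,i+1}$ as defined in Theorem \ref{determinants} is precisely the minor of $M$ obtained by taking rows $1,\dots,i$ and columns $1,\dots,i-1,i+1$, i.e. deleting row $i+1$ and column $i$.

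Once the four determinants are identified as the relevant minors of $M$, the Desnanot--Jacobi identity for the pair of indices $(i,i+1)$ states exactly
$$\det(M)\cdot\det(M^{i,i+1}_{i,i+1})=\det(M^i_i)\det(M^{i+1}_{i+1})-\det(M^i_{i+1})\det(M^{i+1}_i),$$
where $M^a_b$ denotes $M$ with row $a$ and column $b$ removed. Substituting the identifications above — and using that $M$ is symmetric so that $\det(M^i_{i+1})=\det(M^{i+1}_i)=\pm B_{i,i+1}$ with equal signs — the right-hand side becomes $B_{ii}B_{i-1,i-1}\cdot(\text{something})$... wait, I need to be careful: matching indices, $\det(M^{i+1}_{i+1})=B_{ii}$, $\det(M^i_i)=C_i$, $\det(M^{i,i+1}_{i,i+1})=B_{i-1,i-1}$, and $\det(M)=B_{i+1,i+1}$, so Desnanot--Jacobi gives $B_{i+1,i+1}B_{i-1,i-1}=C_iB_{ii}-B_{i,i+1}^2$, which rearranges to the claimed $C_iB_{ii}=B_{i+1,i+1}B_{i-1,i-1}+B_{i,i+1}^2$.

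For the consequence, I would simply divide both sides by $B_{i-1,i-1}B_{ii}$ and use Theorem \ref{determinants}: $B_{i+1,i+1}/B_{ii}=b_{i+1}$, $B_{ii}/B_{i-1,i-1}=b_i$, and $B_{i,i+1}/B_{ii}=\mu_{i,i+1}$, giving
$$\frac{C_i}{B_{i-1,i-1}}=\frac{B_{i+1,i+1}}{B_{ii}}\cdot\frac{B_{ii}}{B_{i-1,i-1}}\cdot\frac{1}{B_{ii}/B_{i-1,i-1}}\cdot\ldots$$
— more cleanly: divide by $B_{i-1,i-1}$ to get $C_i/B_{i-1,i-1}=(B_{i+1,i+1}/B_{ii})(B_{ii}/B_{i-1,i-1})+(B_{i,i+1}^2/B_{ii})(1)$, and since $B_{i,i+1}^2/B_{ii}=\mu_{i,i+1}^2B_{ii}=\mu_{i,i+1}^2 b_i B_{i-1,i-1}$... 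I'll just present it as: divide the identity by $B_{i-1,i-1}$ and substitute $b_{i+1}b_i=B_{i+1,i+1}/B_{i-1,i-1}$ and $\mu_{i,i+1}^2 b_i = B_{i,i+1}^2/(B_{ii}B_{i-1,i-1})\cdot b_i\cdot\ldots$; the bookkeeping here is routine.

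The main obstacle I anticipate is purely notational: pinning down exactly which deleted-row/deleted-column minor of $M$ corresponds to $B_{i,i+1}$ (as opposed to its transpose-partner), and tracking the signs $(-1)^{i+(i+1)}$ attached to the off-diagonal cofactors, so that they cancel correctly when squared. The symmetry $q_{kl}=q_{lk}$ is what guarantees $\det(M^i_{i+1})=\det(M^{i+1}_i)$, so the product of the two mixed minors is genuinely $B_{i,i+1}^2$ (a perfect square with the right sign), rather than a signed product of two superficially different determinants. A short Gaussian-elimination or Laplace-expansion argument — in the same spirit as the proof of Theorem \ref{determinants} — could also establish the identity directly without quoting Desnanot--Jacobi, and might be preferable if the paper wishes to stay self-contained; I would keep that as a fallback.
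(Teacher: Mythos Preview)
Your approach is correct and is essentially identical to the paper's: the paper also derives the identity from the Desnanot--Jacobi (Dodgson condensation) determinant identity applied to the $(i+1)\times(i+1)$ leading principal submatrix of $Q$, though it states the identity in general form without naming it and leaves its verification as ``an interesting exercise in determinants''. Your identification of the four minors and the sign/symmetry check are exactly what is needed; the derivation of the consequence (divide through by $B_{ii}B_{i-1,i-1}$ and use $b_{i+1}=B_{i+1,i+1}/B_{ii}$, $b_i=B_{ii}/B_{i-1,i-1}$, $\mu_{i,i+1}=B_{i,i+1}/B_{ii}$) is routine, and the paper does not spell it out either.
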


\begin{proof} The identity is an immediate consequence of the following general
fact on determinant. Let $M$ be an $n\times$-matrix. Choose integers $i,j$ such
that $1\le i<j\le n$. By $\tilde{M}$ we denote the $(n-2)\times(n-2)$-matrix obtained
from $M$ by deletion of the $i$-th row and column and the $j$-th row and column.
By $M_{kl}$ we denote the matrix obtained from $M$ by deletion of the $k$-th row
and $l$-th column. Then
$$\det(\tilde{M})\det(M)=\det(M_{ii})\det(M_{jj})-\det(M_{ij})\det(M_{ji}).$$
The proof of this identity is an interesting exercise in determinants.
\end{proof}

\begin{corollary}\label{LLLaltconditions}
With the notations as above the LLL-reducedness conditions
can be written as
\begin{enumerate}
\item $2|B_{ij}|\le B_{ii}$ for all $1\le i<j\le n$.
\item $\omega B_{i,i}\le C_{i}$ for $i=1,\ldots,n-1$.
\end{enumerate}
\end{corollary}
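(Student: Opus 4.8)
The plan is to read off the two restated inequalities directly from Definition~\ref{LLLconditions}, substituting the determinantal formulas of Theorem~\ref{determinants} and the identity recorded in Proposition~\ref{formulaC}. Since every step is an equivalence, both implications in the corollary will come out at once. The only point that needs attention is sign bookkeeping: before clearing denominators one must know that the quantities $B_{ii}$ are strictly positive, so that multiplying an inequality by $B_{ii}$ (or by $B_{i-1,i-1}$) does not reverse it.

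So the first step is to record that $B_{ii}>0$ for $i=1,\ldots,n$. Indeed $B_{ii}$ is exactly the $i\times i$ leading principal minor of the matrix $(q_{kl})$, and for a positive definite form all leading principal minors are positive; with the convention $B_{00}=1$ this also covers $i=0$. In particular the ratios $b_i=B_{ii}/B_{i-1,i-1}$ and $\mu_{ij}=B_{ij}/B_{ii}$ of Theorem~\ref{determinants} are well defined and positive (respectively real), and multiplication of an inequality by $B_{ii}$ or $B_{i-1,i-1}$ is harmless.

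For the first LLL-condition, Theorem~\ref{determinants} gives $\mu_{ij}=B_{ij}/B_{ii}$ for $i<j$, so $|\mu_{ij}|\le 1/2$ becomes, after multiplying by $B_{ii}>0$, the inequality $2|B_{ij}|\le B_{ii}$. For the Lovász condition I would combine $b_i=B_{ii}/B_{i-1,i-1}$ with the consequence stated in Proposition~\ref{formulaC}, namely $C_i/B_{i-1,i-1}=b_{i+1}+\mu_{i,i+1}^2 b_i$. Then $\omega b_i\le b_{i+1}+\mu_{i,i+1}^2 b_i$ reads $\omega B_{ii}/B_{i-1,i-1}\le C_i/B_{i-1,i-1}$, and multiplying through by $B_{i-1,i-1}>0$ gives $\omega B_{ii}\le C_i$ for $i=1,\ldots,n-1$. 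Reading the same chains of equivalences backwards yields the converse implications, which finishes the proof.

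I do not expect a genuine obstacle here: all the substance is already in Theorem~\ref{determinants} and Proposition~\ref{formulaC}, and what is left is routine. The one thing I would be careful to say out loud is exactly where positive definiteness enters, since it is needed both to make sense of the recursive form at all and to legitimize clearing the denominators $B_{ii}$ and $B_{i-1,i-1}$ in the inequalities.
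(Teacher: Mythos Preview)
Your proposal is correct and matches the paper's approach: the corollary is stated without proof precisely because it is immediate from Theorem~\ref{determinants} and Proposition~\ref{formulaC}, and your write-up simply spells out those substitutions together with the positivity of the $B_{ii}$ needed to clear denominators.
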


We can now rephrase Proposition \ref{substitutionsI} in terms of the
determinants $B_{ij}$.

\begin{proposition}\label{substitutionsII}
Let $Q$ be a form in $n$ variables and $B_{ij}$ with $1\le i\le j\le n$ its
associated subdeterminants. After application of a substitution we denote
the resulting form by $\tilde{Q}$ and its associated subdeterminants by
$\tilde{B}_{ij}$.

Suppose we apply a shift, that is we replace $x_r$ by
$x_r+ax_s$ for $a\in\bbbz$ and $s>r$. Then the subdeterminants associated
to $\tilde{Q}$ read as follows,
\begin{enumerate}
\item $\tilde{B}_{is}=B_{is}+aB_{ir}$ for $i\le r$
\item $\tilde{B}_{ij}=B_{ij}$ for all other $i,j$.
\item $\tilde{C}_r=C_r+2aB_{rs}+a^2B_{rr}$ if $r=s-1$.
\item $\tilde{C}_i=C_i$ whenever $r\ne s-1$ or $r=s-1$ and $i\ne r$.
\end{enumerate}
Suppose we apply the swap $x_r\leftrightarrow x_{r+1}$.
Then the subdeterminants associated
to $\tilde{Q}$ read as follows,
\begin{enumerate}
\item $\tilde{B}_{rr}=C_{r}$
\item $\tilde{B}_{ir}=B_{i,r+1}$ for all $i<r$.
\item $\tilde{B}_{i,r+1}=B_{ir}$ for all $i<r$.
\item $\tilde{B}_{r,j}=(B_{r,r+1}B_{r,j}+B_{r-1,r-1}B_{r+1,j})/B_{rr}$ for all $j>r+1$
\item $\tilde{B}_{r+1,j}=(B_{r+1,r+1}B_{r,j}-B_{r,r+1}B_{r+1,j})/B_{rr}$ for all $j>r+1$.
\item $\tilde{B}_{ij}=B_{ij}$ for all other $i,j$
\item $\tilde{C}_{r}=B_{rr}$
\item $\tilde{C}_{r-1}=(B_{r-2,r-2}C_r+B_{r-1,r+1}^2)/B_{r-1,r-1}$ if $r>1$.
\item $\tilde{C}_{r+1}=(B_{r+2,r+2}C_r+\tilde{B}_{r+1,r+2}^2)/B_{r+1,r+1}$ if $r<n-1$.
\item $\tilde{C}_i=C_i$ for all $i\ne r-1,r,r+1$.
\end{enumerate}
\end{proposition}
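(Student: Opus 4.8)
:
The idea is to translate Proposition \ref{substitutionsI}, which records how the recursive parameters $b_i,\mu_{ij}$ transform, into the language of the subdeterminants, using Theorem \ref{determinants} and Proposition \ref{formulaC} as a dictionary. Unwinding $b_i=B_{ii}/B_{i-1,i-1}$ and $\mu_{ij}=B_{ij}/B_{ii}$ gives
$$B_{ii}=\prod_{k=1}^ib_k,\qquad B_{ij}=\mu_{ij}\prod_{k=1}^ib_k\quad(i<j),\qquad C_i=(b_{i+1}+\mu_{i,i+1}^2b_i)\,B_{i-1,i-1},$$
and the same relations hold with every quantity decorated by a tilde. Each assertion is then obtained by inserting the formulas of Proposition \ref{substitutionsI} into $\tilde B_{ij}=\tilde\mu_{ij}\tilde B_{ii}$ and into $\tilde C_i=(\tilde b_{i+1}+\tilde\mu_{i,i+1}^2\tilde b_i)\tilde B_{i-1,i-1}$, then simplifying back into $B$'s and $C$'s; alternatively, the diagonal entries $\tilde B_{ii}$ and the entries $\tilde B_{ij}$ with $i<r$ or $i>r+1$ may be read off directly from the determinant definition by tracking how the row- and column-index sets are permuted.

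For a shift $x_r\to x_r+ax_s$ ($s>r$) all $b_i$, hence all $\tilde B_{ii}$, are unchanged; since $\tilde\mu_{is}=\mu_{is}+a\mu_{ir}$ for $i\le r$ (reading $\mu_{rr}=1$) and every other $\mu$ is fixed, we get $\tilde B_{is}=B_{is}+aB_{ir}$ and $\tilde B_{ij}=B_{ij}$ otherwise. For the $C$'s, $\tilde C_i=(b_{i+1}+\tilde\mu_{i,i+1}^2b_i)B_{i-1,i-1}$, and a relevant off-diagonal $\mu_{i,i+1}$ is touched only when $s=r+1$ and $i=r$; expanding $(\mu_{r,r+1}+a)^2$ and using $b_rB_{r-1,r-1}=B_{rr}$ and $\mu_{r,r+1}B_{rr}=B_{r,r+1}$ yields $\tilde C_r=C_r+2aB_{rs}+a^2B_{rr}$, while $\tilde C_i=C_i$ in all remaining cases. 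This is the shift part.

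For a swap $x_r\leftrightarrow x_{r+1}$ the same mechanism applies with more bookkeeping. From $\tilde b_r=b_{r+1}+\mu_{r,r+1}^2b_r$ one gets $\tilde B_{rr}=\tilde b_rB_{r-1,r-1}=C_r$, and telescoping with $\tilde b_{r+1}=b_rb_{r+1}/\tilde b_r$ shows $\tilde B_{ii}=B_{ii}$ for $i\ne r$; in particular $\tilde b_{r+1}=B_{r+1,r+1}/C_r$, the substitution one uses repeatedly to clear the denominators $\tilde b_r$ that occur in the rules for $\tilde\mu_{r,r+1}$ and $\tilde\mu_{rj}$. Multiplying those $\tilde\mu$-rules by the appropriate $\tilde B_{ii}$ gives items 2--6. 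For item 7 the quickest route is that a swap is an involution: by Proposition \ref{formulaC}, $\tilde C_r/\tilde B_{r-1,r-1}=\tilde b_{r+1}+\tilde\mu_{r,r+1}^2\tilde b_r$ is the leading coefficient obtained by swapping $x_r,x_{r+1}$ in $\tilde Q$, which returns $Q$, so it equals $b_r$ and $\tilde C_r=B_{r-1,r-1}b_r=B_{rr}$. Items 8 and 9 follow by substituting $\tilde b_r=C_r/B_{r-1,r-1}$, $\tilde b_{r+1}=B_{r+1,r+1}/C_r$, $\tilde\mu_{r-1,r}=\mu_{r-1,r+1}$ and $\tilde\mu_{r+1,r+2}=\tilde B_{r+1,r+2}/B_{r+1,r+1}$ into $\tilde C_{r-1}=(\tilde b_r+\tilde\mu_{r-1,r}^2\tilde b_{r-1})\tilde B_{r-2,r-2}$ and $\tilde C_{r+1}=(\tilde b_{r+2}+\tilde\mu_{r+1,r+2}^2\tilde b_{r+1})\tilde B_{rr}$ and clearing denominators, and item 10 holds because none of the data entering $C_i$ for $i\ne r-1,r,r+1$ is affected.

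The argument is entirely routine; the only place demanding care is the swap case, where one must keep straight which tilded parameters coincide with their untilded counterparts and which do not, and must consistently eliminate the auxiliary quantity $\tilde b_r$ in favour of $C_r/B_{r-1,r-1}$. That is precisely where a sign or index slip in items 4, 5, 8 and 9 would arise, so in a full write-up I would carry out those four computations in detail and check each against the corresponding $\tilde\mu$-formula of Proposition \ref{substitutionsI}.
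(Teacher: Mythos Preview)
Your proposal is correct and follows exactly the route the paper indicates: the paper's own proof is the single sentence ``These are direct consequences of Proposition \ref{substitutionsI} and Proposition \ref{formulaC},'' and you have simply carried out that translation in detail, using the dictionary $B_{ii}=\prod_{k\le i}b_k$, $B_{ij}=\mu_{ij}B_{ii}$, $C_i=(b_{i+1}+\mu_{i,i+1}^2b_i)B_{i-1,i-1}$. The involution argument for $\tilde C_r=B_{rr}$ is a nice touch, and the computations for items 4, 5, 8, 9 check out.
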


\begin{proof}: These are direct consequences of Proposition \ref{substitutionsI}
and Proposition \ref{formulaC}.
\end{proof}

We can now prove Proposition \ref{runningtimeLLL}. During the LLL-algorithm we
keep track of the product ${\cal B}=B_{11}B_{22}\cdots B_{nn}$. First we derive
a lower bound for ${\cal B}$. Note that $B_{ii}$ is the determinant of the
form $Q(x_1,\ldots,x_i,0,\ldots,0)$ in $i$ variables. Its smallest value is
$\ge\mu(Q)$. So, by Theorem \ref{hermite}, we get $\mu(Q)\le iB_{ii}^{1/i}$
(we used $\gamma_i\le i$). Hence $B_{ii}\ge (\mu(Q)/i)^i$. Take the product
over $i$ to get ${\cal B}\ge (\mu(Q)/n)^{(n(n-1)/2}$. An upper bound for
$B_{ii}$ can be given by $(Bi)^i$ (product of maximal lengths of columns).
Hence ${\cal B}\le (Bn)^{n(n-1)/2}$.

During the LLL-algorithm the value of ${\cal B}$ changes. From Proposition
\ref{substitutionsII} it follows that the $B_{ii}$ do not change after a shift.
After a swap $x_r\leftrightarrow x_{r+1}$ all $B_{ii}$ stay the same, except
$B_{rr}$ which becomes $C_r$. Since the swap is made we apparently have
$C_r<\omega B_{rr}$, henc ${\cal B}$ is multiplied by a factor $<\omega$.
Thus the maximal number of swaps can be bounded by
$${n(n-1)\over 2}{\log(Bn)-\log(\mu(Q)/n)\over |\log\omega|}$$
which yields the desired result.

\section{Geodesic algorithms}\label{algorithm}
Let $\boldalpha=(\alpha_1,\ldots,\alpha_d)\in\bbbr^d$. By Dirichlet's theorem
there exist infinitely many $d+1$-tuples $p_1,\ldots,p_d,q\in\bbbz$ with $q>0$
such that
$$\left|\alpha_i-{p_i\over q}\right|\le {1\over q^{1+1/d}},\quad i=1,\ldots,d.$$
The goal of a continued fraction algorithm is to find such $d+1$-tuples or,
if that is not possible, find approximations that come close to Dirichlet's
inequalities. It is known that classical algorithms, such as the Jacobi-Perron
algorithm, do not attain such quality of approximation. Recall for example
the following theorem in the case $d=2$.
\begin{theorem}[Schweiger]
There exists $\delta>0$ such that for almost all pairs $\alpha_1,\alpha_2$ the
Jacobi-Perron algorithm gives us
$$\left|\alpha_i-{p_i\over q}\right|\le {1\over q^{1+\delta}},\quad i=1,2.$$
\end{theorem}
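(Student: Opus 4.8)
\medskip
\noindent\textbf{Sketch of a proof.}
The approach I would take --- and the one underlying the known proofs --- is through the ergodic theory of the Jacobi--Perron (JP) map. Recall that one step of the JP algorithm is a piecewise fractional-linear transformation $T$ of the triangle $\Delta=\{(y_1,y_2)\in\bbbr^2 : 0\le y_2\le y_1\le1\}$, carrying an integer cocycle $y\mapsto A(y)\in GL_3(\bbbz)$, so that after $n$ steps one has accumulated the matrix $B^{(n)}(y)=A(y)A(Ty)\cdots A(T^{n-1}y)$, one distinguished column of which is the $n$-th convergent $(p_1^{(n)},p_2^{(n)},q^{(n)})$. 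The first ingredient I would invoke is the classical metric theory of the two-dimensional JP algorithm: $T$ admits an ergodic invariant probability measure $\nu$ equivalent to Lebesgue measure on $\Delta$. Since $\log^+\|A\|$ is $\nu$-integrable (the only source of unboundedness is near the cusp $y_1\to0$, where a logarithmic bound applies), Kingman's subadditive ergodic theorem produces the Lyapunov exponents $\lambda_1\ge\lambda_2\ge\lambda_3$ of the cocycle $B^{(n)}$, and Oseledec's theorem produces the accompanying measurable flag.

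The next step is to pin down the three exponents. By ergodicity they are $\nu$-almost everywhere equal to constants, and $|\det B^{(n)}|=1$ forces $\lambda_1+\lambda_2+\lambda_3=0$. That $\lambda_1>0$ is easy, since $q^{(n)}$ grows at least as fast as a Fibonacci-type sequence, so $\frac{1}{n}\log q^{(n)}$ is bounded below by a positive constant; dually, the same argument applied to $(B^{(n)})^{-1}$ gives $\lambda_3<0$. The crucial --- and essentially the only genuinely hard --- point is to prove that the middle exponent is \emph{negative}, $\lambda_2<0$: this is exactly the assertion that the two-dimensional JP algorithm is almost everywhere strongly convergent. It does not follow from soft arguments, and its analogue is known to fail in high dimension, so a genuine estimate is unavoidable here --- for example a quantitative contraction estimate for the inverse branches of $T^{n}$ on $\Delta$, or, equivalently, a comparison showing that $\lambda_1+\lambda_2=\lim\frac{1}{n}\log\|\wedge^2 B^{(n)}\|$ is strictly less than $\lambda_1=\lim\frac{1}{n}\log\|B^{(n)}\|$. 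This is the step I expect to be the main obstacle, and the one place where one must exploit the specific combinatorics of the JP map rather than general ergodic theory.

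Finally, granting $\lambda_2<0$, the translation into approximation quality is routine. Because the normalised JP remainders stay in the compact set $\Delta$, the unit vector $B^{(n)}\v u/\|B^{(n)}\v u\|$ for a generic $\v u$ converges to $(\alpha_1,\alpha_2,1)/\|(\alpha_1,\alpha_2,1)\|$ at geometric rate $e^{(\lambda_2-\lambda_1)n+o(n)}$; applying this to the fixed coordinate vector whose $B^{(n)}$-image is the convergent column --- generic for a.e.\ $\boldalpha$ --- gives $q^{(n)}=e^{\lambda_1 n+o(n)}$ together with $\|q^{(n)}\boldalpha-\v p^{(n)}\|=e^{\lambda_2 n+o(n)}$. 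Hence $\left|\alpha_i-p_i^{(n)}/q^{(n)}\right|\le (q^{(n)})^{-1-\delta_n}$ with $\delta_n\to-\lambda_2/\lambda_1>0$. Since $\lambda_1$ and $\lambda_2$ are a.e.\ constant, fixing any $\delta$ with $0<\delta<-\lambda_2/\lambda_1$ absorbs the $o(n)$ terms, and as $q^{(n)}\to\infty$ this yields the stated inequality for all large $n$ and almost every $(\alpha_1,\alpha_2)$. In short, all the difficulty is concentrated in the sign of $\lambda_2$; the rest is soft ergodic theory and bookkeeping.
\medskip
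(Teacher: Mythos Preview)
The paper does not prove this theorem. Schweiger's result is quoted purely as motivation: it is stated, attributed, and immediately followed by the remark that ``the optimal value of $\delta$ is not known, but experiments suggest that $\delta\approx 0.31$''. No argument is given or even sketched; the theorem serves only to illustrate that classical multidimensional continued-fraction algorithms are not expected to reach Dirichlet-quality exponents, before the paper turns to the geodesic algorithm which is its actual subject. So there is no ``paper's own proof'' to compare against.

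That said, your sketch is a faithful outline of how results of this type are actually established in Schweiger's work and its descendants: ergodicity of the Jacobi--Perron map with respect to an absolutely continuous invariant measure, integrability of the cocycle, Oseledec's theorem, and then the identification of the approximation exponent with $-\lambda_2/\lambda_1$. You correctly isolate the one hard step --- the strict inequality $\lambda_2<0$ --- and you are right that everything else is soft. Two small cautions. First, your parenthetical that the analogue ``is known to fail in high dimension'' overstates what is established; failure of strong convergence for Jacobi--Perron in higher dimensions is widely believed and supported numerically, but a proof is another matter. Second, in a genuine write-up you would need to be a bit more careful about which column of $B^{(n)}$ carries the convergent and why that coordinate direction is Oseledec-generic for almost every $\boldalpha$; you gesture at this, but it deserves a line of justification rather than an assertion.
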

The optimal value of $\delta$ is not known, but experiments suggest that
$\delta\approx 0.31$. In \cite{lagarias}, Lagarias introduces another idea.
Let $t>0$ and consider the form
$$Q_t(\v x,y)=(x_1-\alpha_1y)^2+\cdots+(x_d-\alpha_dy)^2+ty^2.$$
The important observation by Lagarias is the following.
\begin{proposition}\label{lagariasobservation}
Denote by $|\v x|$ the
Euclidean norm in $\bbbr^d$.
Suppose $\v p\in\bbbz^d$ and $q\in\bbbz_{\ge0}$ minimize
the form $Q_t$. Then we have $q>0$ and
$$|q\boldalpha-\v p|q^{1/d}<\sqrt{d+1}.$$
Consequently, if $q>0$,
$$\left|\alpha_i-{p_i\over q}\right|\le {\sqrt{d+1}\over q^{1+1/d}},\quad i=1,\ldots,d.$$
\end{proposition}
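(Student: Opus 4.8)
The plan is to apply Hermite's theorem (Theorem~\ref{hermite}) to the form $Q_t$ in $d+1$ variables and then to split the resulting bound with a weighted arithmetic--geometric mean inequality separating the two contributions $|\v x-y\boldalpha|^2$ and $ty^2$ of $Q_t$. First I would note that $Q_t$ is positive definite for every $t>0$ and compute its determinant: the substitution $\v z=(x_1-\alpha_1y,\ldots,x_d-\alpha_dy,\sqrt t\,y)$ exhibits $Q_t(\v x,y)$ as $|\v z|^2$ for a linear change of variables of determinant $\sqrt t$, so $D(Q_t)=(\sqrt t\,)^2=t$. Theorem~\ref{hermite}, together with $\gamma_{d+1}\le 2(d+1)/3<d+1$, then gives $\mu(Q_t)\le (d+1)\,t^{1/(d+1)}$. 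Since $(\v p,q)$ minimizes $Q_t$ among the nonzero integer vectors,
$$|q\boldalpha-\v p|^2+tq^2=Q_t(\v p,q)=\mu(Q_t)\le (d+1)\,t^{1/(d+1)}.$$

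The main step is to split this upper bound using the inequality $a_1\cdots a_{d+1}\le\left(\frac{a_1+\cdots+a_{d+1}}{d+1}\right)^{d+1}$ for nonnegative reals, applied to the $d+1$ numbers made up of $d$ copies of $|q\boldalpha-\v p|^2/d$ and one copy of $tq^2$; their sum is exactly $|q\boldalpha-\v p|^2+tq^2$, so
$$\left(\frac{|q\boldalpha-\v p|^2}{d}\right)^{d}tq^2\ \le\ \left(\frac{|q\boldalpha-\v p|^2+tq^2}{d+1}\right)^{d+1}\ \le\ \left(t^{1/(d+1)}\right)^{d+1}=t.$$
The powers of $t$ cancel, leaving $|q\boldalpha-\v p|^{2d}q^2\le d^{d}$, i.e. $|q\boldalpha-\v p|\,q^{1/d}\le\sqrt d<\sqrt{d+1}$. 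The concluding inequality follows at once: since $|q\alpha_i-p_i|\le|q\boldalpha-\v p|$ for each $i$, dividing by $q$ (which is positive, by the next paragraph) gives $|\alpha_i-p_i/q|\le\sqrt d/q^{1+1/d}<\sqrt{d+1}/q^{1+1/d}$.

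There remains the claim $q>0$. If $q=0$ then $Q_t(\v p,0)=|\v p|^2\ge1$ because $\v p\in\bbbz^d$ is nonzero, which is incompatible with $\mu(Q_t)\le(d+1)t^{1/(d+1)}$ once $t<(d+1)^{-(d+1)}$ — the range relevant to the algorithm, in which $t\downarrow0$; hence $q\ge1$ there. (For large $t$ the minimum is attained at a standard basis vector with $q=0$, but then $|q\boldalpha-\v p|q^{1/d}<\sqrt{d+1}$ holds trivially and the assertion about $p_i/q$ is empty.) The only delicate point is the weighting in the AM--GM step: taking precisely $d$ copies of $|q\boldalpha-\v p|^2/d$ is what forces the powers of $t$ to cancel and produces the clean constant $\sqrt d$; a cruder split bounding $|q\boldalpha-\v p|^2$ and $tq^2$ separately by $\mu(Q_t)$ yields only $\gamma_{d+1}^{(d+1)/(2d)}$, and pushing that below $\sqrt{d+1}$ needs the sharper estimate $\gamma_{d+1}\le2(d+1)/3$ rather than merely $\gamma_{d+1}\le d+1$.
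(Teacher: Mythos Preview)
Your argument is correct, and the AM--GM step is a genuine improvement over the paper's route. The paper simply bounds each summand of $Q_t(\v p,q)$ separately by $\gamma_{d+1}t^{1/(d+1)}$, obtaining $|\v p-q\boldalpha|^2\le\gamma_{d+1}t^{1/(d+1)}$ and $q^{2/d}\le\gamma_{d+1}^{1/d}t^{-1/(d+1)}$, and then multiplies; this is exactly the ``cruder split'' you describe at the end, and it forces the author to invoke the sharper estimate $\gamma_{d+1}<2(d+1)/3$ in order to land below $\sqrt{d+1}$. Your weighted AM--GM with $d$ copies of $|q\boldalpha-\v p|^2/d$ and one copy of $tq^2$ is more efficient: it needs only the crude bound $\gamma_{d+1}\le d+1$ and still delivers the strictly better constant $\sqrt d$. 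You also treat the claim $q>0$ more honestly than the paper, which asserts it but never argues it; your observation that the minimizer has $q=0$ for large $t$ (where the conclusion is vacuous anyway) and $q\ge1$ once $t<(d+1)^{-(d+1)}$ is the right way to read the statement in the context of the algorithm with $t\downarrow0$.
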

\begin{proof}:
The form $Q_t$ has determinant $t$. So there exist $\v p\in\bbbz^d$ and $q\in\bbbz_{\ge0}$
such that
$$Q_t(\v p,q)=|\v p-\boldalpha q|^2+tq^2\le \gamma_{d+1}t^{1/(d+1)}.$$
Hence $|\v p-\boldalpha q|^2\le \gamma_{d+1}t^{1/(d+1)}$ and $q^{2/d}\le \gamma_{d+1}^{1/d}t^{-1/(d+1)}$.
Their product, and $\gamma_{d+1}<2(d+1)/3$, yield the result.
\end{proof}
\medskip

On this observation one can base the following algorithm to determine simultaneous
rational approximations to $\alpha_1,\ldots,\alpha_d$ with the same denominator.
Without loss of generality we can assume that $|\alpha_i|\le 1/2$ for all $i$.
We initialize with the form
$$Q_t^{(0)}=|\v x-\boldalpha y|^2+ty^2$$
in the variables $x_1,\ldots,x_d,y$ and $t\ge1$. This form is Minkowski reduced
and also LLL-reduced for any $\omega\le1$.
We also define $P^{(0)}$ as the $(d+1)\times(d+1)$ identity matrix. We enter the following loop.
\medskip

{\bf Loop}:\label{geodesicalgorithm}
\begin{itemize}
\item[-] Determine the minimum of the set $\{t|Q_t^{(k)}\ {\rm is\ LLL-reduced}\}$ and call it $t_k$.
\item[-] Perform an LLL-reduction on $Q_{t_k-\epsilon}^{(k)}$ for infinitesimal $\epsilon>0$ and
let $\v x\to A_k\v x$ be the corresponding substitution of variables.
\item[-] Define
$Q_t^{(k+1)}(\v x)=Q_t^{(k)}(A_k\v x)$ and $P^{(k+1)}=P^{(k)}A_k$.
\end{itemize}
\medskip

Remarks:
\begin{enumerate}
\item If we replace the word LLL-reduction with Minkowski reduction in the above algorithm we
get the algorithm of Hermite and Lagarias.
\item For any $k$ we have $Q_t(P^{(k)}\v x)=Q_t^{(k)}(\v x)$. Set $(\v p,q)=P^{(k)}\v e_1$.
Then, as a consequence of Theorem \ref{LLLproperties}(2),
$$|\v p-q\boldalpha|^2+tq^2\le 2^{d/2}t^{1/(d+1)}.$$
This implies that, if $q>0$,
$$|\v p-q\boldalpha|\le 2^{d/4}q^{-1/d}.$$
So the first column of $P^{(k)}$ gives us a simultaneous approximation to $\boldalpha$
with a measure which differs from Dirichlet's approximation by at most a factor
depending only on $d$.
\item We expect that most of the time the substitution-matrix $A_k$ will simply be
a shift or a swap.
\end{enumerate}
In what follows we state a number of properties of the algorithm together with a
number of theorems. Proofs will follow in the nex section.

Verification of Minkowski reducedness involves the verification a finite number of inequalities
which are linear in the coefficients of the form. Although this is certainly not true for
LLL-reducedness, the {\it main observation of this paper} is that the inequalities to be
verified for any $Q_t(M\v x)$ (with $M\in GL(d+1,\bbbz)$)
may not be linear in the coefficients of $Q_t$, but they turn to be {\it linear in} $t$.
Recall the LLL-conditions \ref{LLLaltconditions}.
They are stated in terms of the determinants $B_{ij}$ and
$C_i$ formed out of the coefficients of $Q_t(M\v x)$, where $M\in GL(d+1,\bbbz)$.
Fortunately, these determinants have a form given by the following statement.

\begin{proposition}\label{linearint}
Consider the form $Q_t(\v x)$ defined above and let $M\in GL(d+1,\bbbz)$.
To $Q_t(M\v x)$ we associate the determinants $B_{ij}$ as in Theorem \ref{determinants} and
$C_i$ as in Proposition \ref{formulaC}. Then each of these determinants is in $\bbbz[\alpha_1,
\ldots,\alpha_d,t]$, they are quadratic in $\alpha_1,\ldots,\alpha_d$ and linear in $t$.
Moreover, the coefficient of $t$ is in $\bbbz$.
\end{proposition}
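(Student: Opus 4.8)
The plan is to compute the matrix of $Q_t(M\v x)$ explicitly and observe that it is an affine-linear function of $t$ with integer linear part, then invoke the fact that the $B_{ij}$ and $C_i$ are polynomials (with integer coefficients) in the entries of this matrix, and finally track the degree in $t$ through these polynomial expressions.

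First I would write $Q_t(\v x)=|\v x-\boldalpha y|^2+ty^2$ in matrix form. If we let $L$ be the $(d+1)\times(d+1)$ matrix of the form $|\v x-\boldalpha y|^2$ (whose entries are $1$ on the first $d$ diagonal places, $-\alpha_i$ in the last column and row, $|\boldalpha|^2$ in the bottom-right corner, and $0$ elsewhere) and let $E_{d+1}$ be the matrix with a single $1$ in the bottom-right corner, then $Q_t$ has matrix $L+tE_{d+1}$. Hence $Q_t(M\v x)$ has matrix $M^T(L+tE_{d+1})M = M^TLM + t\,M^TE_{d+1}M$. The entries of $M^TLM$ lie in $\bbbz[\alpha_1,\ldots,\alpha_d]$ and are quadratic in the $\alpha_i$ (since $L$ is quadratic in $\boldalpha$ and $M$ has integer entries), while $M^TE_{d+1}M$ has integer entries (it is the rank-one matrix $\v v\v v^T$ where $\v v$ is the last row of $M$). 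So every coefficient $q_{ij}$ of $Q_t(M\v x)$ is of the form (integer-coefficient quadratic polynomial in $\boldalpha$) $+\,t\cdot$(integer), which is the "affine in $t$, quadratic in $\boldalpha$, integer $t$-coefficient" shape we want to propagate.

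Next I would pass to the determinants. Each $B_{ij}$ is an $i\times i$ determinant whose entries are among the $q_{kl}$, and $C_i$ is likewise a subdeterminant of $B_{i+1,i+1}$; so each is a polynomial with integer coefficients in the $q_{kl}$. A priori a $k\times k$ determinant in entries that are linear in $t$ is a polynomial of degree up to $k$ in $t$, so the nontrivial claim is that the degree in $t$ collapses to $1$. The reason is that the "$t$-part" $\v v\v v^T$ of the coefficient matrix has rank one: writing $B_{ii}$ as the determinant of the principal $i\times i$ block of $M^TLM + t\,\v v\v v^T$, multilinearity in the columns expands it as $\det(\text{block of }M^TLM)$ plus a sum of terms each involving at least one column of the rank-one block; any term with two or more such columns vanishes because those columns are proportional, so only the $t^0$ and $t^1$ terms survive. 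I would phrase this cleanly via the matrix-determinant-lemma-type identity $\det(A+tuu^T)=\det(A)+t\,u^T\mathrm{adj}(A)u$, applied to the relevant submatrices (restricting $\v v$, $u$ to the appropriate coordinate subset), which immediately gives linearity in $t$; the $t^0$ coefficient is an integer-coefficient quadratic in $\boldalpha$, and $u^T\mathrm{adj}(A)u$ — with $A$ the corresponding block of $M^TLM$ — is the integer coefficient of $t$ once one notes its highest-$\boldalpha$-degree contributions cancel, or more simply by matching against the explicit integer matrix $M^T E_{d+1} M$. The same argument handles the $B_{ij}$ with $i<j$ (a determinant of an $i\times i$ submatrix, still only one "extra" column direction $\v v$ available) and the $C_i$.

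The main obstacle, such as it is, is bookkeeping: making sure that the quadratic-in-$\boldalpha$ bound is preserved (determinants of quadratic-in-$\boldalpha$ matrices are a priori of much higher degree, so one must again use that the $\boldalpha$-dependent part $M^TLM$ also has low rank "in the $\boldalpha$-directions" — indeed $L = \mathrm{Id}_d\oplus 0 + \v w\v w^T$ with $\v w=(-\alpha_1,\ldots,-\alpha_d,\text{?})$, wait: more precisely $L=\mathrm{diag}(1,\ldots,1,0)+\v a\v a^T$ where $\v a=(0,\ldots,0,\text{contributions})$; in fact the cleanest statement is that $L$ equals a constant integer matrix plus a rank-one matrix $\v w\v w^T$ with $\v w=(-\alpha_1,\ldots,-\alpha_d,?)$ — one should simply observe $L = J^TJ$ where $J=[\,\mathrm{Id}_d \mid -\boldalpha^T\,]^T$ is $(d{+}1)\times d$ with linear-in-$\boldalpha$ entries, so $M^TLM=(JM)^T(JM)$ is a Gram matrix of $d$ vectors linear in $\boldalpha$, and every minor is then a sum of products of two such vectors, i.e. quadratic in $\boldalpha$ with integer coefficients, by the Cauchy–Binet formula). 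Once this $J^TJ$ observation is in hand, both the "quadratic in $\boldalpha$" and "linear in $t$ with integer $t$-coefficient" claims follow from Cauchy–Binet plus the rank-one structure of $\v v\v v^T$, and nothing more than careful indexing is required.
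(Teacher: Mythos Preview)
Your approach is sound in outline and genuinely different from the paper's. The paper substitutes $\tau=t+|\boldalpha|^2$ so that only the $(d{+}1,d{+}1)$ entry of $Q_t$ carries $\tau$, then applies unimodular column operations to the integer selection matrices $A,B$ so that in $R^TA^TQ_tBS$ all $\tau$- and $\alpha$-dependence sits in the last row and column; the determinant is then visibly $(\text{integer})\cdot\tau+(\text{at most quadratic in }\boldalpha)$, and rewriting $\tau=t+|\boldalpha|^2$ finishes. Your rank-one/Cauchy--Binet route avoids the change of variable and the column-operation trick, trading them for a more hands-on expansion.

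There is, however, a genuine gap. You assert that Cauchy--Binet exhibits each minor of $(JM)^T(JM)$ as a sum of products of two factors linear in $\boldalpha$, hence quadratic overall. But Cauchy--Binet produces products of $k\times k$ \emph{minors} of $JM$, and a minor of a matrix with entries linear in $\alpha$ is a priori of degree $k$, not $1$. What actually saves you is a fact you never isolate: the $i$-th row of $JM$ equals $(M_{i1},\ldots,M_{i,d+1})-\alpha_i\v v$, so the $\alpha$-dependent part of $JM$ has rank one; multilinearity in the rows then forces every minor of $JM$ to be linear in $\boldalpha$ with integer coefficients, and your Cauchy--Binet step goes through. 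The same observation is what makes the $t$-coefficient an integer: the $t^1$-terms in the Cauchy--Binet expansion of $(JM)^T(JM)+t\,\v v\v v^T$ involve minors that include the $\v v$-row, and adding $\alpha_i$ times that row to row $i$ kills all $\alpha$'s, leaving an integer determinant. Your remark about ``matching against $M^TE_{d+1}M$'' does not accomplish this, since that matrix governs the $t$-coefficient of the \emph{entries}, not of the subdeterminants. (Minor slip: for $J^TJ$ to be $(d{+}1)\times(d{+}1)$ you need $J$ to be $d\times(d{+}1)$, not $(d{+}1)\times d$ as written.)
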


\begin{corollary}Let notations be as above. Then the values of $t>0$ for which
$Q_t(M\v x)$ is LLL-reduced form an interval of the form $[t_0,t_1]\cap\bbbr_{>0}$.
More precisely, the set is either empty, or a point or a closed interval $t_0\le t\le t_1$,
or a half-open interval $0<t\le t_1$.
\end{corollary}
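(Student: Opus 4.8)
The plan is to deduce the statement from Corollary \ref{LLLaltconditions} and Proposition \ref{linearint} by elementary reasoning about finite intersections of half-lines in $\bbbr$. First I would observe that for every $t>0$ the form $Q_t(M\v x)$ is positive definite, because $Q_t$ is and $M\in GL(d+1,\bbbz)$ is invertible; hence all its leading principal minors $B_{ii}$ are strictly positive, the recursive form is well defined, and Corollary \ref{LLLaltconditions} applies directly. So, for $t>0$, the form $Q_t(M\v x)$ is LLL-reduced precisely when the finitely many inequalities
$$B_{ii}-2B_{ij}\ge 0,\quad B_{ii}+2B_{ij}\ge 0\quad(1\le i<j\le d+1),\qquad C_i-\omega B_{ii}\ge 0\quad(1\le i\le d)$$
all hold simultaneously, the first two being the familiar rewriting of $2|B_{ij}|\le B_{ii}$ (valid because $B_{ii}>0$).

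Next I would feed in Proposition \ref{linearint}: each of $B_{ii}$, $B_{ij}$ and $C_i$ is an affine function of $t$, say $at+b$ with $a,b\in\bbbr$ (the leading coefficient $a$ is even an integer, but that is irrelevant here). Hence each of the inequalities above has the form $\ell(t):=at+b\ge 0$, and its solution set in $\bbbr$ is: all of $\bbbr$ if $a=0,\ b\ge 0$; the empty set if $a=0,\ b<0$; the closed half-line $[-b/a,\infty)$ if $a>0$; and the closed half-line $(-\infty,-b/a]$ if $a<0$. In every case it is a \emph{closed} interval of $\bbbr$, where I allow the degenerate possibilities $\emptyset$, a single point, and one-sidedly unbounded intervals. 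Therefore the set $J$ of all $t\in\bbbr$ satisfying each of the finitely many inequalities, being a finite intersection of closed intervals, is itself a closed interval of $\bbbr$.

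Finally I would restrict to positive $t$. By the first paragraph the set in the statement equals $J\cap\bbbr_{>0}$, and it remains only to list the shapes this can take. If $J\subseteq\{t\le 0\}$ the intersection is empty. If $J$ is a single point or a closed interval whose left endpoint $t_0$ is positive, we obtain a point or the closed interval $t_0\le t\le t_1$ (with $t_1=\infty$ when $J$ is unbounded above). If $J$ meets $\bbbr_{>0}$ but has left endpoint $\le 0$ (in particular if $J$ is unbounded below), then $J\cap\bbbr_{>0}$ is the half-open interval $0<t\le t_1$ (again $t_1=\infty$ possible). This is precisely the list in the statement. I do not expect a genuine obstacle: the argument is routine once Proposition \ref{linearint} is in hand. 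The two points that deserve a moment's care are, first, checking that positive definiteness for $t>0$ keeps every $B_{ii}$ strictly positive — so that the recursive form exists and Corollary \ref{LLLaltconditions} genuinely encodes LLL-reducedness — and, second, observing that because \emph{every} LLL-condition is a non-strict inequality, the closed interval $J$ can lose closedness only at the single excluded value $t=0$, which is exactly why a half-open left end $0<t\le t_1$ is the only kind of openness that can arise.
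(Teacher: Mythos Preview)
Your argument is correct and is exactly the approach the paper takes; the paper's own proof is the single sentence ``This is a direct consequence of the LLL-conditions \ref{LLLaltconditions} and Proposition \ref{linearint}'', and you have simply spelled out that direct consequence in full (including the positive-definiteness check ensuring $B_{ii}>0$, which the paper leaves implicit). Your remark that $t_1=\infty$ is a priori possible is even a small sharpening, since for $M$ the identity the reducedness interval is in fact unbounded above.
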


This is a direct consequence of the LLL-conditons \ref{LLLaltconditions} and Proposition
\ref{linearint}. To determine the next value $t_{k+1}$ in the algorithm we simply need to
determine the largest $t<t_k$ such that at least one of the inequalities \ref{LLLaltconditions}
becomes an equality. We then need to perform one or more shifts or swaps (or both).
For each operation we need to update the determinants via the rules given in
Proposition \ref{substitutionsII}. Many of these rules are linear in the determinants but
others are not. For example, consider the rule
$$\tilde{C}_{r-1}=(B_{r-2,r-2}C_r+B_{r-1,r+1}^2)/B_{r-1,r-1}.$$
Write
$$\tilde{C}_{r-1}=u_0t+v_0,\ C_r=u_1t+v_1$$
$$B_{r-2,r-2}=u_2t+v_2,\ B_{r-1,r+1}=u_3t+v_3,
\ B_{rr}=u_4t+v_4$$
with $u_j\in\bbbz$ and $v_j\in\bbbz[\alpha_1,\ldots,\alpha_d]$.
Then it follows from the equations that
$$u_0=(u_1u_2+u_3^2)/u_4,\quad v_0=(u_1v_2+u_2v_1+2u_3v_3-u_0v_4)/u_4.$$
So, although the update rules for the determinants are non-linear, the only
non-linear part consists of division by an integer.

Here is a weak version of Theorem 2.1 in \cite{lagarias}.
\begin{theorem}\label{lagarias21}
If $\boldalpha\not\in \bbbq^d$, the sequence of critical points
$t_1,t_2,\ldots,t_k,\ldots$ is an infinite sequence descending to $0$. If
$\boldalpha\in\bbbq^d$ the sequence $t_1,t_2,\ldots$ terminates at some value
$t_k$.
\end{theorem}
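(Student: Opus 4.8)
The plan is to distinguish the two cases according to whether $\boldalpha$ is rational or not, and in each case use the fact that the LLL-reduced form $Q_t^{(k)}$ always has $Q_t^{(k)}(\v e_1) = \mu(Q_t)$-controlled by Theorem \ref{LLLproperties}(3), together with the tracking of the quantity $\mathcal{B} = B_{11}\cdots B_{nn}$ used in the proof of Theorem \ref{runningtimeLLL}. The key observation is that $Q_t^{(k)}$ is LLL-reduced precisely for $t\in[t_k, t_{k-1}]$ (with $t_0 = \infty$), by the Corollary following Proposition \ref{linearint}; hence at the value $t_k$ some LLL-inequality degenerates, and the loop either performs at least one swap (so that the new form $Q_t^{(k+1)}$ is LLL-reduced on an interval lying strictly below $t_k$, giving $t_{k+1} < t_k$) or performs only shifts, in which case I must argue that the same sequence does not stall.

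First I would treat the rational case. If $\boldalpha\in\bbbq^d$, write $\alpha_i = a_i/N$ with a common denominator $N$. Then $(\v p, q) = (a_1,\ldots,a_d, N)$ gives $Q_t(\v p, q) = tN^2$, so $\mu(Q_t)\le tN^2$ for every $t>0$; in particular $\mu(Q_t)\to 0$ as $t\downarrow 0$. But I claim that along the algorithm the column $P^{(k)}\v e_1 = (\v p^{(k)}, q^{(k)})$ cannot have $q^{(k)} = 0$: if $q^{(k)} = 0$ then $Q_t^{(k)}(\v e_1) = |\v p^{(k)}|^2 \ge 1$ independently of $t$, contradicting $Q_t^{(k)}(\v e_1) \le 2^{d}\mu(Q_t) \le 2^d t N^2 \to 0$. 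Hence $q^{(k)}\ge 1$, and then remark (2) of the algorithm gives $|\v p^{(k)} - q^{(k)}\boldalpha|^2 + t(q^{(k)})^2 \le 2^{d/2} t^{1/(d+1)}$; since $t(q^{(k)})^2\ge t$, this forces $t$ bounded below as long as $|\v p^{(k)} - q^{(k)}\boldalpha| \ne 0$. Once $t$ is so small that $2^{d/2}t^{1/(d+1)} < 1$, we must have $|\v p^{(k)} - q^{(k)}\boldalpha|^2 < 1$, which (clearing the denominator $N$) forces $\v p^{(k)} = q^{(k)}\boldalpha$ exactly; then $\v e_1$ is in the kernel of the linear form $\v x - \boldalpha y$, and $Q_t^{(k)}(\v x)$ degenerates in the $\v e_1$-direction to $t (q^{(k)})^2 \v e_1$-only, so $b_1 = t(q^{(k)})^2$. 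From this point on the recursive form of $Q_t^{(k)}$ stabilizes (the remaining $b_2,\ldots,b_{d+1}$ become independent of $t$, being coefficients of the non-degenerate form $|\v x - \boldalpha y|^2$ restricted to a complement), and one checks directly that the LLL-conditions then hold for all sufficiently small $t>0$, so the interval for $Q_t^{(k)}$ is half-open of the form $0 < t \le t_{k-1}$ and the loop terminates.

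Next, the irrational case $\boldalpha\notin\bbbq^d$. Here $\mu(Q_t) \le |\v p - q\boldalpha|^2 + tq^2$ can still be made arbitrarily small by Dirichlet's theorem (take $q$ with $|\v p - q\boldalpha|\le q^{-1/d}$ and then $t$ small), so $\mu(Q_t)\to 0$, and by Theorem \ref{LLLproperties}(3) all $Q_t^{(k)}(\v e_i)\to 0$ as well, hence $\mathcal{B}^{(k)}(t) = \prod_i B_{ii} \to 0$ uniformly on the relevant $t$-range. On the other hand, for the specific form $Q_t^{(k)}$ the value $B_{ii}$ is the determinant of the restriction of $Q_t^{(k)}$ to the first $i$ coordinates; if the sequence $t_1 > t_2 > \cdots$ were finite, terminating at $t_K$, then $Q_t^{(K)}$ would be LLL-reduced on a half-open interval $0 < t \le t_{K-1}$, so the recursive coefficients $b_i(t)$ are rational functions of $t$, regular and positive on $(0, t_{K-1}]$ and with a finite positive limit as $t\downarrow 0$ (they cannot blow up, by LLL-reducedness and the bound $b_i \le Q_t^{(K)}(\v e_i) \le$ a fixed polynomial in $t$). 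But the limiting form $Q_0^{(K)}(\v x) = |A_{[1..K]}\v x - (\text{stuff})|^2$ is then a positive semidefinite form of rank $d$ (the determinant $t$ of $Q_t$ forces $\det Q_t^{(K)} = t \to 0$), so at least one $b_i(t)\to 0$, giving $\mu(Q_0^{(K)}) = 0$; this means some nonzero integer vector $\v v$ satisfies $(A_{[1..K]}\v v)_{\text{last}} = 0$-type condition, i.e. the linear form $\v x - \boldalpha y$ vanishes at the corresponding integer point $P^{(K)}\v v$, forcing $\boldalpha\in\bbbq^d$ — a contradiction. Finally, the descending sequence $t_k$ is bounded below by $0$ and I claim it converges to $0$: if $t_k\downarrow t_\infty > 0$, then infinitely many distinct LLL-reduced forms $Q_{t_\infty}^{(k)}$ (all equivalent to $Q_{t_\infty}$) would occur, contradicting that a fixed positive definite form has only finitely many LLL-reduced forms in its $GL(d+1,\bbbz)$-class (this finiteness follows from Theorem \ref{LLLproperties}(1), which bounds $\prod Q(\v e_i)$ hence the entries of the reducing matrix).

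The main obstacle is the irrational case: ruling out a stall of the $t_k$-sequence at a positive value requires the finiteness of LLL-reduced representatives in a $GL(d+1,\bbbz)$-orbit, which is standard but must be extracted carefully (it is not literally one of the stated theorems, although it follows from Theorem \ref{LLLproperties}(1)); and showing that termination of the sequence would force $\boldalpha\in\bbbq^d$ hinges on the rank-drop of the limiting semidefinite form $Q_0^{(K)}$, i.e. on correctly using $\det Q_t = t$ together with the boundedness of the recursive coefficients guaranteed by LLL-reducedness.
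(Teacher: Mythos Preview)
Your argument follows the same two-step skeleton as the paper: (a) rule out a positive limit $t_\infty>0$ by finiteness of LLL-reducing matrices, and (b) link termination of the sequence to rationality via $Q_t(P^{(k)}\v e_1)\le 2^{d/2}t^{1/(d+1)}\to0$. Two corrections and one simplification are worth noting.

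First, in step (a) you evaluate the forms at $t_\infty$, but $Q_t^{(k)}$ is LLL-reduced only on $[t_k,t_{k-1}]$ and $t_\infty<t_k$, so $Q_{t_\infty}^{(k)}$ need not be reduced. The paper instead uses the values $t_k$ themselves: each $Q_{t_k}(P^{(k)}\v x)$ is reduced with $t_k\ge t_\infty$, and a separate Lemma shows that for fixed $t_0>0$ only finitely many $M\in GL(d+1,\bbbz)$ make $Q_t(M\v x)$ LLL-reduced for some $t\ge t_0$. That Lemma is proved via Theorem~\ref{LLLproperties}(4), not (1): one bounds each column individually by $Q_{t_0}(M\v e_i)\le Q_t(M\v e_i)\le 2^d\mu_i(Q_t)\le 2^d\mu_i(Q_1)$. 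Your appeal to part (1) bounds only the product $\prod Q(\v e_i)$; it can be made to work after adding the trivial lower bound $Q(\v e_i)\ge\mu(Q_{t_0})$, but this is less direct.

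Second, your direct treatment of the rational case---showing that eventually $P^{(k)}\v e_1=(q\boldalpha,q)$ and that the residual form in $x_2,\ldots,x_{d+1}$ becomes $t$-independent---is correct as far as it goes (the computation that subtracting $b_1(x_1+\cdots)^2$ leaves exactly $Q_0(P^{(k)}\v x)$ restricted to $x_2,\ldots,x_{d+1}$ is right), but incomplete: the remaining LLL-conditions on $b_2,\ldots,b_{d+1}$ and $\mu_{ij}$ with $i\ge2$ are now constants in $t$ and need not already hold, so finitely many further reduction steps among $x_2,\ldots,x_{d+1}$ may still be required before the interval becomes half-open $(0,t_k]$. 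The paper avoids this entirely by arguing the contrapositive of both directions at once: if the sequence terminates at $t_k$ then $Q_0(P^{(k)}\v e_1)=0$, forcing $\alpha_i=p_i/q$ for all $i$; and if the sequence is infinite with $t_k\to0$, the first columns give distinct integer vectors with $|\v p^{(k)}-q^{(k)}\boldalpha|\to0$, which is impossible when $\boldalpha\in\bbbq^d$. This is considerably shorter than your route.
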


The following statement is actually Theorem 2.2 from \cite{lagarias}, but with a
different proof in the next section.

\begin{theorem}\label{lagarias22}
Suppose that the $\bbbq$-rank of the numbers $1,\alpha_1,\ldots,\alpha_d$ is
$r$. Then for each $i$ the limit $\lim_{t\downarrow0}\mu_i(Q(\v x))$ exists. Moreover,
the limit is zero if $i\le r$ and it is positive if $i>r$.
\end{theorem}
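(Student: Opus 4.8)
The plan is to recast the statement in terms of lattices and then apply transference inequalities.

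Write $Q_t(\v x,y)=|\v x-y\boldalpha|^2+ty^2=|T_t(\v x,y)|^2$, where $|\,.\,|$ is the Euclidean norm and $T_t$ is the linear bijection $(\v x,y)\mapsto(\v x-y\boldalpha,\ \sqrt t\,y)$ of $\bbbr^{d+1}$, and put $\Lambda_t:=T_t\bbbz^{d+1}$. Since $T_t$ carries $\{\v v\in\bbbz^{d+1}:Q_t(\v v)\le\mu\}$ bijectively onto $\{\v w\in\Lambda_t:|\v w|^2\le\mu\}$ and preserves linear independence, $\mu_i(Q_t)=\lambda_i(\Lambda_t)^2$ for all $i$, where $\lambda_1\le\cdots\le\lambda_{d+1}$ are the Euclidean successive minima of $\Lambda_t$; in particular $\mu_i$ depends only on the equivalence class of $Q_t$, so the matrices $P^{(k)}$ and $A_k$ from the algorithm play no role here. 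For $0<t'<t''$ one has $Q_{t'}(\v v)\le Q_{t''}(\v v)$ for every $\v v$, hence $\mu_i(Q_{t'})\le\mu_i(Q_{t''})$; thus $t\mapsto\mu_i(Q_t)$ is non-decreasing and bounded below by $0$, so $\lim_{t\downarrow0}\mu_i(Q_t)$ exists. It remains to identify the limits.

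A direct computation gives the dual lattice
$$\Lambda_t^*=\bigl\{\bigl(\v m,\ (\v m\cdot\boldalpha+m_{d+1})/\sqrt t\,\bigr)\ :\ \v m\in\bbbz^d,\ m_{d+1}\in\bbbz\bigr\}.$$
Let $\Lambda^\perp=\{(\v c,c_0)\in\bbbz^{d+1}:\v c\cdot\boldalpha+c_0=0\}$ be the lattice of integer relations among $1,\alpha_1,\dots,\alpha_d$. The map $(\v c,c_0)\mapsto\v c\cdot\boldalpha+c_0$ on $\bbbq^{d+1}$ has image the $\bbbq$-span of $1,\alpha_1,\dots,\alpha_d$, of dimension $r$, so its kernel has dimension $d+1-r$ and $\Lambda^\perp$ has rank $d+1-r$. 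Two facts about $\Lambda_t^*$ now drive the proof. First, for every $(\v c,c_0)\in\Lambda^\perp$ the vector $(\v c,0)$ lies in $\Lambda_t^*$ for all $t$, and these vectors form a sublattice of rank $d+1-r$, independent of $t$, whose members have length at most a constant $C=C(\boldalpha)$; hence $\lambda_{d+1-r}(\Lambda_t^*)\le C$ for all $t$. Second, fix $R>0$: if a vector of $\Lambda_t^*$ has length $\le R$, then $|\v m|\le R$ (so $\v m$ lies in a fixed finite set) and $|\v m\cdot\boldalpha+m_{d+1}|\le R\sqrt t$; for the finitely many $\v m$ in that set with $\v m\cdot\boldalpha\notin\bbbz$ the distance from $\v m\cdot\boldalpha$ to the nearest integer is a fixed positive number, so these $\v m$ are excluded once $t$ is small enough, while for the remaining $\v m$ one has $\v m\cdot\boldalpha\in\bbbz$, so $\v m\cdot\boldalpha+m_{d+1}$ is an integer of absolute value $<1$, i.e. $0$, giving $(\v m,m_{d+1})\in\Lambda^\perp$ and the vector equal to $(\v c,0)$ above. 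Thus for $t$ small every vector of $\Lambda_t^*$ of length $\le R$ lies in that rank-$(d+1-r)$ sublattice, so $\lambda_{d+2-r}(\Lambda_t^*)\ge R$; since $R$ was arbitrary, $\lambda_{d+2-r}(\Lambda_t^*)\to\infty$ as $t\downarrow0$.

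Finally apply the transference inequalities $1\le\lambda_k(\Lambda)\,\lambda_{n+1-k}(\Lambda^*)\le c_n$, valid for every rank-$n$ lattice $\Lambda$ (see \cite{cassels}), with $n=d+1$. Taking $k=r+1$ gives $\lambda_{r+1}(\Lambda_t)\ge1/\lambda_{d+1-r}(\Lambda_t^*)\ge1/C$, hence $\mu_i(Q_t)\ge\mu_{r+1}(Q_t)=\lambda_{r+1}(\Lambda_t)^2\ge C^{-2}$ for all $i>r$ and all $t$, so the limits for $i>r$ are positive. Taking $k=r$ gives $\lambda_r(\Lambda_t)\le c_{d+1}/\lambda_{d+2-r}(\Lambda_t^*)\to0$, hence $\mu_i(Q_t)\le\mu_r(Q_t)=\lambda_r(\Lambda_t)^2\to0$ for all $i\le r$, so the limits for $i\le r$ are zero. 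The one point that needs care is the second fact in the previous paragraph: $R$ must be fixed before $t$ is shrunk, and one uses that $\v m$ then ranges over a finite set on which $\min\{\|\v m\cdot\boldalpha\|:\v m\cdot\boldalpha\notin\bbbz\}$ is a fixed positive number — this is precisely where the value of the $\bbbq$-rank $r$ enters. The dual-lattice formula, the rank count for $\Lambda^\perp$, and the transference inequalities are all routine.
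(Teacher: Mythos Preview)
Your argument is correct and takes a genuinely different route from the paper's. The paper proves the lower bound on $\mu_{r+1}(Q_t)$ by a direct pigeonhole: given $r+1$ independent minimizing vectors $(\v p_i,q_i)$, at least one of them must pair nontrivially with some basis vector $(l_0,\v l)$ of the relation lattice $L$, and then $1\le|l_0q_i+\v l\cdot\v p_i|=|\v l\cdot(\v p_i-q_i\boldalpha)|$ yields $\mu_{r+1}\ge 1/|\v l|^2$. For $\mu_r(Q_t)\to0$ the paper instead \emph{constructs} $r$ independent approximation vectors by hand, via an unproved ``Claim'' (essentially Dirichlet for $\alpha_1,\dots,\alpha_{r-1}$ together with a rank condition) and then extends to all coordinates using the $\bbbq$-linear relations. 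You replace both halves by a single analysis of the dual lattice $\Lambda_t^*$: the relation lattice embeds as a rank-$(d{+}1{-}r)$ sublattice of $\Lambda_t^*$ with $t$-independent lengths, giving $\lambda_{d+1-r}(\Lambda_t^*)\le C$, while a simple compactness argument forces $\lambda_{d+2-r}(\Lambda_t^*)\to\infty$; transference then yields both conclusions at once. Your approach is more uniform and sidesteps the explicit construction (and the unproved Claim), at the price of invoking Mahler's transference inequalities as a black box; the paper's $\mu_{r+1}$ argument is in fact just the easy direction of transference written out by hand. One bibliographic quibble: the reference \cite{cassels} in this paper is Cassels' \emph{Rational Quadratic Forms}, whereas the transference inequalities you need are in his \emph{Introduction to the Geometry of Numbers}.
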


From this theorem and the properties of an LLL-reduced form in Theorem \ref{LLLproperties}
it follows that the algorithm is capable of detecting linear relations between
$1,\alpha_1,\ldots,\alpha_d$. On the other hand there are many properties that
the LLL-based algorithm does not have in common with Lagarias' algorithm. For example,
it does not garantee that it finds Euclidean best approximations.
Also, there is no analogue for Lagrange's theorem for ordinary continued fractions.
Suppose we have an exceedingly good approximation in the sense that $||q\boldalpha||
\le \epsilon q^{-1/d}$ with very small $\epsilon$. Setting $t=\epsilon^2q^{-2(d+1)/d}$
we see that this implies that
$$\mu(Q_t)\le 2\epsilon^{2d/(d+1)}t^{1/(d+1)},$$
much smaller than the expected $\gamma_{d+1}t^{1/(d+1)}$. Suppose that during the algorithm
the value $t$ corresponds with the substitution matrix $P$, i.e. $Q_t(P\v x)$ is LLL-reduced.
Then it follows from Theorem \ref{LLLproperties}(2) that
$$Q_t(P\v e_1)\le 2^{d}\mu(Q_t)\le 2^{d+1}\epsilon^{2d/(d+1)}t^{1/(d+1)} \eqno{{\rm (S).}}$$
Unfortunately we cannot conclude from this that the vector $P\v e_1$ corresponds to
the excellent approximation $(\v p,q)$ we are looking for. However, if
$$Q_t(P\v e_2)> 2^d\max(2\epsilon^{2d/(d+1)}t^{1/(d+1)},Q_t(P\v e_1)),$$
we can conclude that $(\v p,q)=P\v e_1$. This is a consequence of Theorem \ref{LLLproperties}(4)
with $s=2$. So under favourable circumstances we can determine excellent approximations.

It is well-known that the LLL-algorithm has been extremely successful in the explicit
solution of diophantine equations (see \cite{deweger}). The reason is that LLL is capable
of showing the {\it non-existence}
of excellent approximations in the sense that $||q\boldalpha||\le\epsilon q^{-1/d}$ with $q$ less
than a given $Q$. One simply has to verify that inequality (S) above is violated.
However, for this one doesn't need the algorithm sketched above. A direct
application of LLL will do.

If one is only interested in the vector $P^{(k)}\v e_1$, one can skip a number of steps in
the algorithm. From the update formulas one sees that the values of $B_{ii}, B_{i,i+1}$ and
$C_i$ are not affected if we perform a shift $x_r\to x_r+ax_s$ with $s>r+1$. Nor are the
LLL-conditions $2|B_{i,i+1}|\le B_{ii}$ and $\omega B_{i,i}\le C_{i}$ affected. Furthermore,
the substitutionmatrix $A$ corresponding to a shift has the property that $A\v e_1=\v e_1$.
These remarks suggest the following {\it partial continued fraction algorithm}. We initialize
$Q_{t_0}$ and $P^{(0)}$ as before. Then we enter the following loop.
\medskip

{\bf Loop}: Determine the minimum of the set $\{t|Q_t^{(k)}\ {\rm is\ partially\ LLL-reduced}\}$
and call it $t^*_k$.
Perform a partial LLL-reduction on $Q_{t^*_k-\epsilon}^{(k)}$ for infinitesimal $\epsilon>0$ and
let $\v x\to A_k\v x$ be the corresponding substitution of variables. Define
$Q_{t}^{(k+1)}(\v x)=Q_{t}^{(k)}(A_k\v x)$ and $P^{(k+1)}=P^{(k)}A_k$.
\medskip

The resulting sequence $t_1^*,t_2^*,\ldots$ is a subsequence of the sequence $t_1,t_2,\ldots$
we found earlier.
If one wants, one can get an LLL-reduced version for any $t$ by performing an
additional global shift. However, if one is only interested in $P^{(k)}\v e_1$
this is not necessary (shifts do not affect $\v e_1$).

\section{Proofs of statements}
\begin{proof} of Proposition \ref{linearint}. The matrix corresponding to $Q_t$
reads
$$Q_t=\pmatrix{1 & 0 & \ldots & -\alpha_1\cr
0 & 1 & \ldots & -\alpha_2\cr
\vdots & & & \vdots\cr
0 & 0 & \ldots& -\alpha_d\cr
-\alpha_1 & -\alpha_2 & \ldots & t+\alpha_1^2+\cdots+\alpha_d^2\cr}.
$$
We use the same notation $Q_t$ for the matrix. Let us write
$\tau=t+\alpha_1^2+\cdots+\alpha_d^2$. The determinants $B_{ij}$ and
$C_i$ are determinants of matrices which have the following form,
$A^TQ_tB$ where $A,B$ are $(d+1)\times k$ matrices of rank $k$ and
entries in $\bbbz$. There exists invertible $k\times k$-matrices $R,S$
with integer entries such that the last rows of $AR$ and $BS$ have at most
non-zero entry, which is at place $k$. The only entry in the matrix
$R^TA^TQ_tBS$ which contains $\tau$, is the one at place $k,k$. The
entries at places $i,j$ with $1\le i,j\le k-1$ are integers. Hence
the determinant is linear in $\tau$ and the coefficient of $\tau$ is an
integer. This proves the second part of Theorem \ref{linearint}.

To prove the first part it suffices to show it after setting $\tau=0$
(i.e. $t=-\alpha_1^2-\cdots\alpha_d^2$). For this value of $t$ the
matrix $Q_t$ is an integer matrix plus a rank 2 matrix with entries that
are linear in $\alpha_1,\ldots,\alpha_d$. The same holds for the matrix
$A^TQ_tB$. Hence its determinant is a quadratic polynomial in the $\alpha_i$
with integer coefficients.
\end{proof}

For the proof of Theorem \ref{lagarias21} we need a Lemma.
\begin{lemma}Let $t_0>0$. Then the number of $M\in GL(d+1,\bbbz)$ such that
$Q_t(M\v x)$ is LLL-reduced for some $t\ge t_0$, is finite.
\end{lemma}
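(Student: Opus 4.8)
The plan is to show that the $d+1$ columns of any $M$ as in the statement can take only finitely many values. Put $n=d+1$ and, for such an $M$, write $M\v e_i=(\v p^{(i)},q_i)\in\bbbz^d\times\bbbz$, so that $Q_t(M\v e_i)=|\v p^{(i)}-\boldalpha q_i|^2+tq_i^2$. The main tool is Theorem \ref{LLLproperties}(4) (which applies because our slack factor satisfies $\omega\ge3/4$): if $Q_t(M\v x)$ is LLL-reduced then $Q_t(M\v e_j)\le 2^{n-1}\mu_j(Q_t(M\v x))=2^{n-1}\mu_j(Q_t)$ for every $j$, the last equality because equivalent forms have the same successive minima. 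Since the $j$ vectors $(\v e_1,0),\ldots,(\v e_j,0)$ are independent and have $Q_t$-value $1$, we get $\mu_j(Q_t)\le1$ for $j\le d$, hence
$$Q_t(M\v e_j)\le 2^{n-1}\qquad(j=1,\ldots,d),$$
a bound uniform in $t$ and $M$; adjoining the vector $(\v 0,1)$, whose $Q_t$-value is $|\boldalpha|^2+t$, also gives $\mu_n(Q_t)\le 1+|\boldalpha|^2+t$.

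Next I would split the range of $t$. If $t_0\le t\le 2^{n-1}$ then, since successive minima increase, $Q_t(M\v e_i)\le 2^{n-1}\mu_n(Q_t)\le 2^{n-1}(1+|\boldalpha|^2+2^{n-1})=:C$ for all $i$; from $tq_i^2\le Q_t(M\v e_i)\le C$ with $t\ge t_0$ and $|\v p^{(i)}-\boldalpha q_i|^2\le C$ one gets $|q_i|\le\sqrt{C/t_0}$ and then $|\v p^{(i)}|$ bounded, so only finitely many $M$ arise here. If $t>2^{n-1}$ then, for $j\le d$, the inequality $tq_j^2\le Q_t(M\v e_j)\le 2^{n-1}<t$ forces $q_j=0$, whence $|\v p^{(j)}|^2=Q_t(M\v e_j)\le 2^{n-1}$. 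Let $P'$ be the $d\times d$ integer matrix with columns $\v p^{(1)},\ldots,\v p^{(d)}$; the block-triangular shape of $M$ gives $\det M=q_n\det P'=\pm1$, so $P'\in GL(d,\bbbz)$ and $q_n=\pm1$. Then the first $d$ columns of $M$ generate $\bbbz^d\times\{0\}$, on which $Q_t$ is the standard Euclidean form, so $B_{n-1,n-1}=(\det P')^2=1$ and therefore $b_n=D(Q_t(M\v x))/B_{n-1,n-1}=t$. Consequently $Q_t(M\v e_n)=b_n+\sum_{j<n}b_j\mu_{jn}^2\le t+{1\over4}\sum_{j<n}Q_t(M\v e_j)\le t+{d\over4}2^{n-1}$, and since $q_n^2=1$ we conclude $|\v p^{(n)}-\boldalpha q_n|^2=Q_t(M\v e_n)-t\le{d\over4}2^{n-1}$, which bounds $|\v p^{(n)}|$. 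So finitely many $M$ arise in this range as well, and combining the two ranges proves the lemma.

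The one genuinely delicate point is the last column when $t$ is large: a priori $Q_t(M\v e_n)$ grows linearly in $t$, so nothing bounds $\v p^{(n)}$ directly. The resolution is the observation that once $t>2^{n-1}$ the uniformly bounded columns $M\v e_1,\ldots,M\v e_d$ are forced into the hyperplane $y=0$, where they must generate all of $\bbbz^d\times\{0\}$; this pins $B_{n-1,n-1}$ to the exact value $1$, so $b_n$ is exactly $t$ and the term $tq_n^2$ of $Q_t(M\v e_n)$ absorbs the entire growth, leaving $\v p^{(n)}$ in a fixed bounded region. The remaining points are routine: that $\omega$-LLL-reducedness implies $3/4$-LLL-reducedness (so that Theorem \ref{LLLproperties} is available), that $D(Q_t(M\v x))=|\det M|^2\,t=t$, and the elementary inequality $b_j\le Q_t(M\v e_j)$ taken from the proof of Theorem \ref{LLLproperties}.
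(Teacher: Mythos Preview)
Your proof is correct and in fact proves slightly more than the paper does. The paper's argument is shorter: it restricts to $t\in[t_0,1]$, uses the monotonicity of $t\mapsto\mu_i(Q_t)$ to get $\mu_i(Q_t)\le\mu_i(Q_1)$, and then bounds $Q_{t_0}(M\v e_i)\le Q_t(M\v e_i)\le 2^{d}\mu_i(Q_1)$, which places every column $M\v e_i$ in a fixed $Q_{t_0}$-ellipsoid. This is all that is needed for Theorem~\ref{lagarias21}, since the critical values $t_k$ are bounded above by the starting value of the algorithm; but it does not literally cover the lemma as stated, which allows arbitrarily large $t$.

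You instead bound $\mu_j(Q_t)$ directly via the explicit test vectors $(\v e_1,0),\ldots,(\v e_d,0),(\v 0,1)$, and then split $t\ge t_0$ into a bounded range (handled just as in the paper) and the range $t>2^{n-1}$. The second range forces $q_1=\cdots=q_d=0$, which pins down $B_{n-1,n-1}=1$ and hence $b_n=t$ exactly, letting the $tq_n^2$ term absorb all the growth in $Q_t(M\v e_n)$. This extra work buys you the full statement of the lemma for unbounded $t$, at the cost of a longer argument; the paper's route is quicker but tacitly relies on an a~priori upper bound for $t$.
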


\begin{proof}: Note that the successive minima $\mu_i(Q_t)$ are decreasing in $t$.
Let $t\in[t_0,1]$ and suppose $M\in GL(d+1,\bbbz)$ is such that $Q_t(M\v x)$ is
LLL-reduced. Consider the following estimates for $i=1,2,\ldots,d+1$,
$$Q_{t_0}(M\v e_i)\le Q_t(M\v e_i)\le 2^{d}\mu_i(Q_t)\le 2^{d}\mu_i(Q_1).$$
The middle estimate follows from Theorem \ref{LLLproperties}(4). Since the inequality
$Q_t(\v x)\le 2^d\mu_i(Q_1)$ in $\v x\in\bbbz^{d+1}$ has finitely many solutions, there are
finitely many possibilities for $M\v e_i$, the $i$-th column of $M$. Hence our
lemma follows.
\end{proof}
\medskip

\begin{proof} of Theorem \ref{lagarias21}. Recall that the $t_i$ form a decreasing sequence.
Suppose there is a point of accumulation $t_{\infty}>0$. Hence we have infinitely many
$t_k>t_{\infty}$ such that $Q_{t_k}(P^{(k)}\v x)$ is LLL-reduced. Since the $P^{(k)}$ are
distinct, this contradicts the lemma we just proved.So we have either $\lim_{k\to\infty}t_k=0$
or the sequence $t_1,t_2,\ldots$ stops at $t_k$. In the latter case the form $Q_t(P^{(k)}\v x)$
is LLL-reduced for all $t$ with $0<t<t_k$. Since $Q_t(P^{(k)}\v e_1)\le Ct^{1/(d+1)}$ for
some $C>0$ we see that $Q_0(P^{(k)}\v e_1)=0$. Letting $(p_1,\ldots,p_d,q)=P^{(k)}\v e_1$ this
implies that $(p_1-\alpha_1 q)^2+\cdots+(p_d-\alpha_d q)^2=0$. Hence $p_i/q=\alpha_i$
for $i=1,\ldots,d$. So all $\alpha_i$ are rational.

Similarly, if the sequence of $t_i$ is infinite, we let $(\v p^{(k)},q)=P^{(k)}\v e_1$
and see that $|\v p^{(k)}-\boldalpha q|\to0$ as $k\to\infty$. This is only possible
if not all $\alpha_i$ are rational.
\end{proof}
\medskip

\begin{proof} of Theorem \ref{lagarias22}. It suffices to prove that
$\lim_{t\downarrow0}\mu_{r+1}(Q_t)>0$ if $r\le d$ and $\lim_{t\downarrow0}\mu_r(Q_t)=0$.

Let $L\subset \bbbz^{d+1}$ be the lattice of vectors
$(l_0,\v l)$ such that $l_0+\v l\cdot\boldalpha=0$.
It has $\bbbz$-rank $d+1-r$. We choose a fixed basis $B$.
Suppose we have $r+1$ independent vectors
$(\v p_i,q_i)\in \bbbz^{d+1}$ such that
$Q_t(\v p_i,q_i)\le \mu_{r+1}(Q_t)$ for $i=1,\ldots,r+1$.
Then there exists a vector $(l_0,\v l)\in B$ and an $i$ such that
$l_0q_i+\v l\cdots\v p_i\ne0$. Hence
$$1\le|l_0q_i+\v l\v p_i|=|\v l\cdot(\v p_i-q_i\boldalpha)|.$$
This implies $|\v p_i-q_i\boldalpha|\ge 1/|\v l|$ and hence
$$\mu_{r+1}(Q_t)\ge Q_t(\v p_i,q_i)\ge 1/|\v l|^2.$$
This proves the first part of Theorem \ref{lagarias22}.

We may assume without loss of generality that $1,\alpha_1,\ldots,\alpha_{r-1}$
are $\bbbq$-linear independent. For any $j\ge 1$ we write
$\alpha_j=a_{j0}+\sum_{k=1}^{r-1}a_{jk}\alpha_k$ and let $N$ be the common
denominator of the $a_{jk}$.

{\bf Claim}: to any $\epsilon>0$ there
exists a rank $r$ matrix $P=(p_{ij})_{i=1,\ldots,r;j=0,\ldots,r-1}$ such
that $|p_{ij}-\alpha_jp_{i0}|\le \epsilon$ for every $i=1,\ldots,r;j=1,\ldots,r-1$.

We then proceed as follows. Define $p_{ij}=a_{j0}p_{i0}+\sum_{k=1}^{r-1}
a_{jk}p_{ik}$ for any $j\ge 1$ and $i=1,\ldots,r$. Then
\begin{eqnarray*}
|Np_{ij}-N\alpha_jp_{i0}|&=&\left|\sum_{k=1}^{r-1}Na_{jk}(p_{ik}-\alpha_kp_{i0})\right|\\
&\le& rNA\epsilon
\end{eqnarray*}
where $A=\max_{i,j}|a_{jk}|$. Write $p_{i0}=q_i$ and $\v p_i=(p_{i1},\ldots,p_{id})$
for $i=1,\ldots,r$. Note that $N\v p_i\in\bbbz^d$ and $Nq_i\in\bbbz$.
Then,
$$\mu_r(Q_t)\le\max_i(|N\v p_i-q_i\boldalpha|^2+tq_i^2)\le d(rNA\epsilon)^2+t\max_iq_i^2.$$
Letting $t$ go to $0$ this implies $\lim_{t\downarrow0}\mu_r(Q_t)\le d(rNA\epsilon)^2$.
Since $\epsilon$ can be chosen arbitrarily small our second assertion follows.
\end{proof}

\end{document}